\theoremstyle{plain}
\newtheorem{theorem}{Theorem}[section]
\newtheorem{lemma}{Lemma}[section]
\newtheorem{proposition}{Proposition}[section]
\newtheorem{conjecture}[theorem]{Conjecture}
\newtheorem{question}{Question}
\theoremstyle{definition}
\newtheorem{definition}{Definition}[section]
\newtheorem{remark}{Remark}[section]
\newtheorem{case[theorem]}{Case}
\newcommand{\beql}[1]{\begin{equation}\label{#1}}
\newcommand{\eeq}{\end{equation}}
\newcommand{\comment}[1]{}
\newcommand{\Abs}[1]{{\left|{#1}\right|}}
\newcommand{\Norm}[1]{{\left\|{#1}\right\|}}
\newcommand{\Floor}[1]{{\left\lfloor{#1}\right\rfloor}}
\newcommand{\Ceil}[1]{{\left\lceil{#1}\right\rceil}}
\newcommand{\Set}[1]{{\left\{{#1}\right\}}}
\newcommand{\RR}{{\mathbb R}}
\newcommand{\CC}{{\mathbb C}}
\newcommand{\ZZ}{{\mathbb Z}}
\newcommand{\one}{{\mathbbm 1}}
\newcommand{\dens}{{\rm dens\,}}
\newcommand{\ft}[1]{\widehat{#1}}
\newcommand{\pp}[1]{{\left({#1}\right)}}
\newcommand{\diam}{{\rm diam\,}}
\newcounter{rem}
\newcommand{\Inn}{\operatorname{Int}}
\newcounter{step}
\newcounter{mysec}
\newcounter{mysubsec}[mysec]
  \newenvironment{enumerate-math}
{\begin{enumerate}
		\addtolength{\itemsep}{5pt}
		}
	{\end{enumerate}}
\begin{document}

\title{Tiling, spectrality and aperiodicity of connected sets}
    
    	\author{Rachel Greenfeld}
    	\address{School of Mathematics, Institute for Advanced Study, Princeton, NJ 08540.}
    	\email{greenfeld.math@gmail.com}
    	\author{Mihail N. Kolountzakis}
    	\address{Dept. of Mathematics and Applied Mathematics, University of Crete, Voutes Campus, 70013 Heraklion, Crete, Greece.}
    	\email{kolount@uoc.gr}

\begin{abstract}
Let $\Omega\subset \RR^d$ be a set of finite measure.
    The periodic tiling conjecture suggests that if $\Omega$ tiles $\RR^d$ by translations then it admits at least one periodic tiling. Fuglede's conjecture suggests that $\Omega$ admits an orthogonal basis of exponential functions if and only if it tiles $\RR^d$ by translations. Both conjectures are known to be false in sufficiently high dimensions, with all the so-far-known counterexamples being highly disconnected.
    On the other hand, both conjectures are known to be true for convex sets.
    In this work we connectify counterexamples to the above conjectures, at a cost in dimension.

    (a) Starting from a counterexample to the periodic tiling conjecture in dimension $d$ we construct another counterexample, in dimension $d+2$, which is connected.

    (b) Then we extend our method and show, starting from a counterexample in dimension $d$ to the ``spectral $\implies$ tiling'' direction of the Fuglede conjecture, that connected such counterexamples exist in dimension $d+2$.

    (c) Last, we show that counterexamples to the ``tiling $\implies$ spectral'' direction of the Fuglede conjecture exist in {\em some} dimension, by appropriately iterating our method for the previous two problems.
\end{abstract}

\keywords{Tiling. Spectral sets. Aperiodic tiling. Einstein tiling problem}
\subjclass[2020]{42B10, 52C22, 52C23}

\maketitle

\tableofcontents

\section{Introduction}

\subsection{Trading dimension for freedom in tilings by translation}

Tiling by translation is a fascinating subject with connections to several parts of analysis and number theory, as well as, of course, geometry.
Restricting the motions of the tile to translations imposes a stronger structure on tilings compared with tilings where the tile (or tiles)
are allowed a greater group of motions. Tilings by translation often have, or are conjectured to have, properties that more general tilings do not have. This paper focuses on two of them: periodicity and spectrality. In the first we seek to understand if a translational tile must also be able to tile in a periodic manner, a property known to fail for tilings with a larger group of motions.
In the second the Fuglede conjecture identifies domains that tile with domains that admit an orthogonal basis of exponentials for their $L^2$ space.

It has turned out that both these properties cease to hold when the dimension is sufficiently large. It appears that the extra freedom afforded
by high dimension compensates for the rigidity imposed by restricting to translations. It is exactly this phenomenon that we exploit in this paper: increasing the dimension allows us to obtain more well behaved counterexamples to the Periodic Tiling Conjecture and to the Fuglede Conjecture, namely it allows us to obtain {\em connected} sets as counterexamples.

\subsection{Tilings and periodicity}

The study of the structure of tilings goes back to Hilbert's 18th problem. This problem was later generalized to the well known ``einstein\footnote{Here, the word ``einstein'' refers to ``one stone'' in German.} problem'', which asks about the existence of a single shape which tiles the space but does so only in a non-periodic way. Such a tile is called ``aperiodic'' or an ``einstein''. Socolar--Taylor \cite{einstein} constructed a planar aperiodic tile which tiles the plane by translations, rotations and also reflections, but this tile is highly disconnected. The Socolar--Taylor construction was later extended  to the Schmitt--Conway--Danzer tile: A convex three-dimensional domain which tiles $\RR^3$ aperiodically by translations, rotations and reflections  \cite{schmidt}.
The einstein problem for {\it planar connected tiles}  remained open, until very recently, when ``The Hat'' tile was discovered by Smith--Myers--Kaplan--Goodman-Strauss \cite{hat}. Moreover, in a subsequent paper, the same authors constructed a connected planar ``einstein'' which tiles the plane aperiodically by translations and rotations only (no reflections) \cite{chiral}. It is known, however, that there is no  {\it translational} einstein which is a topological disk \cite{BN,K}.  It was recently shown \cite{GT22} that aperiodic translational tiles exist in high dimensions. The first part of this paper (Section \ref{sec:aperiodic}) is devoted to the question whether there are any {\it aperiodic connected translational tiles}.

Let $\Omega\subset \RR^d$ be a measurable set of finite, positive measure. We call $\Omega$ a \emph{translational tile} of $\RR^d$ if there exists a (countable) set $A\subset \RR^d$ such that the family of translates of $\Omega$ along the elements of $A$:
$$\Omega+a, \;  a\in A,
$$
covers almost every point in $\RR^d$ exactly once. The set $A$ is then called a \emph{tiling of $\RR^d$ by $\Omega$}, and we write:
$$  \Omega\oplus A=\RR^d.
$$

Similarly, a finite subset $F\subset \ZZ^d$ is a translational tile of $\ZZ^d$ if there exists $A\subset \ZZ^d$ such that the sets $F+a$, $a\in A$, form a partition of $\ZZ^d$, namely: $F\oplus A=\ZZ^d$. In this case, $A$ is called a \emph{tiling of $\ZZ^d$ by $F$}.

For $G=\RR^d$ or $G=\ZZ^d$,
a tiling $A$ in $G$ is said to be \emph{periodic} if there exists a lattice $\Lambda$, a discrete subgroup of $G$ containing $d$ linearly independent elements, such that $A$ is invariant under translations by any point in this lattice; namely 
$$ A+\lambda=A,\quad \lambda\in\Lambda
$$
for some co-compact subgroup $\Lambda$ of $G$. A translational tile of $G$ is called \emph{aperiodic} if none of the tilings that it admits are periodic.

 \begin{remark}
We caution that in some of the literature the term ``periodic'' instead refers to sets that are unions of cosets of some non-trivial cyclic subgroup of $G$ (in other words, one period vector is sufficient, rather than a set of periods that ``span'' the group).  The notion of an aperiodic tiling  is similarly modified in such literature, and the notion of aperiodicity used here is sometimes referred to as ``weak aperiodicity''. For tilings in dimensions $d\leq 2$ the two notions of aperiodicity coincide \cite[Theorem 3.7.1]{GS}.
\end{remark}

In the 60's,  H. Wang \cite{wang} conjectured that any tiling, by an arbitrary finite number of tiles, in $\ZZ^2$ admits a periodic tiling. Wang also showed that if this conjecture were true, then the question whether a given collection of finite subsets of $\ZZ^2$ tiles would be \emph{algorithmically decidable}: there would be an algorithm that provides an answer to this question in finite time. A few years later, Berger proved  \cite{Ber,Ber-thesis} a negative answer to both questions. He constructed an \emph{aperiodic} tiling with 20,426 tiles: this tile-set admits tilings but none of these tilings are periodic. Then, using this construction, he also proved that tilings by multiple tiles in $\ZZ^2$ are undecidable. Since then, there has been an extensive effort to reduce the possible size of aperiodic and undecidable tile-sets, see \cite[Table 2]{GT21}. Recently, in \cite{GT21}, it was proved that tilings with two tiles are  undecidable in high dimensions and later, in \cite{GT23} the undecidability of translational monotilings was established.

As for translational tiling by a single tile, the celebrated {\it periodic tiling conjecture} \cite{GS,LW}  asserts that there are no aperiodic translational tiles:

\begin{conjecture}[The periodic tiling conjecture]\label{ptc}
    Let $\Omega\subset\RR^d$ be a set of finite, positive measure. If $\Omega$ tiles $\RR^d$ by translations then it must admit at least one periodic tiling. 
\end{conjecture}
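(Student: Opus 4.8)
The plan is to attack the conjecture by the two-stage strategy that has succeeded in low dimensions: first reduce the continuous tiling problem to a purely combinatorial statement about tilings of a finitely generated abelian group, and then, inside the compact dynamical system of all tilings by the tile, locate one that is invariant under a full-rank lattice of periods. The engine throughout is the Fourier-analytic form of the tiling identity, which rigidly constrains the admissible tiling sets $A$.

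First I would record that form. Writing $\Omega \oplus A = \RR^d$ as the convolution identity $\one_\Omega * \delta_A = 1$ almost everywhere and taking (tempered-distributional) Fourier transforms gives $\ft{\one_\Omega}\cdot\ft{\delta_A} = c\,\delta_0$. Since $\Omega$ has finite measure, $\ft{\one_\Omega}$ is continuous and nonzero near the origin, so away from $0$ the spectrum of the translation-bounded measure $\delta_A$ is forced into the closed zero set $Z=\Set{\xi:\ft{\one_\Omega}(\xi)=0}$. This is the rigidity I would exploit to rationalize the problem: one shows the tiling may be taken to have finite local complexity and to live over a lattice, reducing $\Omega\oplus A=\RR^d$ to a tiling $F\oplus A=\ZZ^d$ by a finite set $F$, and then, by passing to a quotient $\ZZ^d/N\ZZ^d$, to a tiling of a finite abelian group. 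In this discrete language a \emph{periodic} tiling is one whose tiling set is a union of cosets of a finite-index subgroup, which is exactly what I must produce.

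Next I would set up the space $X$ of all tilings of $\ZZ^d$ by $F$ as a compact subshift under the $\ZZ^d$-shift, equip it with a shift-invariant probability measure, and try to extract a periodic point. The base cases are known and would serve as templates: for $d=1$ every tiling by a finite set is already periodic, and for $d=2$ Bhattacharya's theorem produces a periodic tiling by slicing a two-dimensional tiling into one-dimensional fibers, proving that each fiber is eventually periodic, and then controlling the resulting set of period vectors by an additive-combinatorial argument. The general step would be an induction on $d$: fiber a $d$-dimensional tiling over a $(d-1)$-dimensional one, apply the inductive hypothesis to the fibers, and glue the fiberwise periods into a full-rank lattice of periods for the whole tiling.

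The step I expect to be the genuine obstacle is exactly this dimension induction for $d\ge 3$. The low-dimensional arguments rest on a rigidity of one-dimensional fibers (a finitely supported function tiles $\ZZ$ in essentially only periodic ways) together with the fact that in the plane the fibers fit together with very few degrees of freedom; in three or more dimensions the fibers acquire enough independent freedom that fiberwise periodicity no longer forces a common period, and no substitute for this rigidity is presently known. This is not a technicality to be absorbed by a sharper estimate: it is where the whole scheme stands or falls, and it is precisely the place where I expect the program to be blocked — indeed, the constructions underlying the present paper (following Greenfeld--Tao) show that in high dimensions there is no common period to be found.
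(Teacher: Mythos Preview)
The statement you are addressing is Conjecture~\ref{ptc}, the periodic tiling conjecture. The paper does not prove it; on the contrary, the paper takes as input the Greenfeld--Tao counterexample \cite{GT22} showing the conjecture is \emph{false} in sufficiently high dimension, and builds on that to produce connected counterexamples (Theorem~\ref{main:aperiodic}). There is therefore no ``paper's own proof'' to compare against, and no proof of the conjecture is possible in full generality.

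Your write-up is not a proof but a narrative of a strategy together with an explicit admission that the key inductive step fails for $d\ge 3$. That is an accurate diagnosis --- the dimension induction via one-dimensional fibers does collapse exactly where you say --- but it means your proposal establishes nothing. If the intent was to prove the conjecture, the genuine gap is not a missing lemma but the fact that the target statement is false: the aperiodic tile of \cite{GT22} (and the connected one constructed in this paper) witnesses that no argument of the type you outline, nor any other, can succeed in high dimensions. If instead the intent was to explain why known positive results stop at $d\le 2$, then the content is reasonable commentary but should not be labeled a proof proposal.
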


 The periodic tiling conjecture  is known to hold in $\RR$ \cite{LW}, in $\RR^2$ for topological disks \cite{BN,K} and also for convex domains in all dimensions  \cite{V,M}. However, very recently the periodic tiling conjecture was disproved in high dimensions  \cite{GT22}. 

Since the counterexample constructed in \cite{GT22} is disconnected, a natural followup question  is whether the periodic tiling conjecture is true for  connected  sets\footnote{To avoid trivial constructions, e.g., adding  zero-measure line segments between connected components to make  the set connected while trivially preserving aperiodicity, we require that the connected set is also the closure of its interior.} in all dimensions, see \cite[Question 10.3]{GT22}. 

Our first result gives a negative answer to this question:

\begin{theorem}\label{main:aperiodic}
For sufficiently large $d$, there exists a  set $\Omega$ in $\RR^d$ of  finite measure which is the closure of its interior, such that:
\begin{enumerate-math}
\item  $\Omega$ is connected.
\item  $\Omega$ tiles $\RR^d$ by translations.
\item If $\Omega\oplus A=\RR^d$ then $A$ is non-periodic.
\end{enumerate-math}
\end{theorem}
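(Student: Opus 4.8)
The plan is to start from the known high-dimensional counterexample to the periodic tiling conjecture, i.e., a set $\Omega_0\subset\RR^{d}$ of finite measure that tiles $\RR^{d}$ by translations but only aperiodically, as provided by \cite{GT22}. Up to a routine reduction, one can take the counterexample to live in $\ZZ^d$: there is a finite set $F\subset\ZZ^d$ that tiles $\ZZ^d$ by translations, with every tiling of $\ZZ^d$ by $F$ being non-periodic. The goal is then to ``inflate'' $F$ into a connected open subset of $\RR^{d+2}$ that inherits the aperiodicity. The basic idea is a \emph{bridging} construction: replace each lattice point of $\ZZ^d$ by a small cube (say a copy of a fixed fundamental domain of the tiling lattice), so that the disjoint union of cubes over $F$ is a faithful Euclidean model of $F$, and then adjoin thin tubes connecting the cubes so that the whole set becomes connected, while arranging that the tubes do not interfere with the tiling combinatorics.

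\medskip

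\textbf{Step 1 (from $\RR^d$ to $\ZZ^d$).} First I would reduce Theorem~\ref{main:aperiodic} to producing a connected set in $\RR^{d+2}$, the closure of its interior, built out of a fixed ``unit gadget'' and translates thereof along a set encoding an aperiodic $\ZZ^d$-tiling. The standard dictionary between tilings of $\RR^d$ by a union of unit cubes and tilings of $\ZZ^d$ by the corresponding finite set lets one pass between the two worlds; the subtlety, compared with the disconnected case, is that we must make the Euclidean realization connected without creating spurious tilings or destroying aperiodicity. This is where the two extra dimensions enter.

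\medskip

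\textbf{Step 2 (the connecting gadget in two extra dimensions).} In $\RR^{d}\times\RR^2$, place over each point $n\in F$ a box $Q+ (n,0)$ where $Q=[0,1)^d\times[0,1)^2$ (after rescaling $\ZZ^d$ so that $F$ tiles by the standard lattice). To connect these boxes, use the two new coordinates: attach to each box a family of thin ``antennas'' or ``handles'' living essentially in the $\RR^2$-slab, joining a box over $n$ to the box over $n'$ whenever $n,n'$ are adjacent in some fixed spanning structure on $F$; route all antennas through a shared region (e.g. a neighborhood of $\RR^d\times\{p\}$ for a fixed $p$) so that the whole set $\Omega$ is path-connected. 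The key design constraint is that the antennas must be \emph{self-packing}: the antenna attached to the box over $n$ must, under the same translations that tile the boxes, tile the complementary region of the antennas. This is the heart of the construction: one wants $\Omega\oplus A=\RR^{d+2}$ to hold for exactly those $A$ that come from tilings $F\oplus B=\ZZ^d$ (with $A$ obtained from $B$ by the cube dictionary), so that the bijection between tilings of $\Omega$ and tilings of $F$ is preserved, and hence the aperiodicity of $F$ forces aperiodicity of $\Omega$. The two extra dimensions give enough room to make such a self-tiling antenna: one can, for instance, take a ``comb''-like or ``brick''-like shape in $\RR^2$ whose translates by the same lattice partition the slab, analogous to the $\RR^2$ connecting constructions used to connectify tiles.

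\medskip

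\textbf{Step 3 (verifying the tiling correspondence and aperiodicity).} Finally I would prove: (i) $\Omega$ tiles $\RR^{d+2}$, by exhibiting the tiling coming from any tiling of $\ZZ^d$ by $F$; (ii) conversely, every tiling $\Omega\oplus A=\RR^{d+2}$ is forced, by a measure/rigidity argument using that the boxes $Q$ occupy ``most'' of $\Omega$ and must align to the cube lattice in the first $d$ coordinates, to be of the cube-dictionary form, hence to descend to a tiling of $\ZZ^d$ by $F$; (iii) therefore any periodic tiling of $\Omega$ would descend to a periodic tiling of $\ZZ^d$ by $F$, contradicting the choice of $F$. One then checks that $\Omega$ has finite measure and is the closure of its interior (the gadget can be taken open with the right boundary behavior), and that $\Omega$ is connected by the routing in Step~2. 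I expect the main obstacle to be Step~2 together with part (ii) of Step~3: engineering a connecting gadget that is simultaneously (a) self-tiling under the relevant lattice, (b) rigid enough that no unexpected tilings of $\Omega$ appear, and (c) genuinely connects everything — getting all three at once is exactly the reason two extra dimensions, rather than one, are needed, and making the rigidity argument in (ii) airtight (ruling out ``sliding'' of boxes or antennas relative to one another) will require care.
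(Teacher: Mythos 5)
Your overall strategy --- pass to a finite aperiodic $F\subset\ZZ^d$, connectify it in two extra dimensions by a gadget that respects the tiling combinatorics, and inflate back to $\RR^{d+2}$ --- is the right one, but the proposal has a genuine gap exactly where you flag it: the connecting gadget is never constructed, and the verification scheme you propose for it is both harder than necessary and not carried out. Your ``thin self-packing antennas'' would have to have positive measure (a zero-measure tube would violate the closure-of-the-interior requirement and trivialize the problem), tile the complement of the boxes under the same translation set, and connect everything; no such object is exhibited. The paper's resolution is structurally different: the bridge is not thin at all. One takes a lattice path $v_0,\dots,v_{n-1}$ in $\ZZ^d$ visiting every connected component of $F$, a $2\times n$ rectangle $S=\{s_0,\dots,s_{2n-1}\}\subset\ZZ^2$ traversed back and forth, and sets $F'=(F\times\{0\}^2)\oplus X$ with $X=\{(0,s_0),\dots,(0,s_{n-1}),(v_0,s_n),\dots,(v_{n-1},s_{2n-1})\}$: that is, $2n$ full translated copies of $F$, one over each cell of $S$, held fixed along the outgoing row and slid along the path on the return row. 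Because the $\ZZ^2$-projection of $X$ is the tile $S$, the set $F'$ tiles $\ZZ^{d+2}$ automatically (via the product tiling $A\times T$), and connectivity follows because the copy of $C_0$ translated by $v_k$ over the cell $s_{n+k}$ sits directly above the untranslated copy of $C_j$ over the cell $s_{n-k-1}$ whenever $v_k=a_j$. No ``self-packing'' engineering is required.

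The second, more serious, issue is your step 3(ii): you want every tiling of $\Omega$ to be of cube-dictionary form, hence to descend bijectively to a tiling of $\ZZ^d$ by $F$, and you correctly anticipate that ruling out sliding will ``require care''. This rigidity statement is neither needed nor what the paper proves. Theorem~\ref{thm:discrete} uses a soft, one-directional slicing argument: if $A'$ is a periodic tiling of $\ZZ^{d+2}$ by $F'$ with period lattice $G'$, set $V=\ZZ^d\times\{0\}^2$, $G=G'\cap V$ (still a lattice in $V$) and $A=(A'+X)\cap V$; then the identity $(F'+a')\cap V=F\times\{0\}^2+\bigl((X+a')\cap V\bigr)$ shows $F\times\{0\}^2\oplus A=V$, while $A+G=A$, contradicting the aperiodicity of $F$. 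The tilings of $F'$ need not be products and need not be classified. Finally, the passage from $\RR^{d+2}$ back to $\ZZ^{d+2}$ --- the one place where rigidity genuinely is needed --- is not achieved with boxes-plus-antennas but by inflating $F'$ with the ``dented cube'' $R_{d+2}$ of \cite{GT22}, which forces any Euclidean tiling to align with $\ZZ^{d+2}$; without importing that device (or constructing an equivalent one) your step 3(ii) remains open.
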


In fact, we show that any $d$-dimensional disconnected counterexample to the periodic tiling conjecture $\Omega$ gives rise to a $(d+2)$-dimensional  counterexample $\Omega'$, which is {\it connected}. 

The proof is done by first showing that certain type of operations on a given finite set $F\subset \ZZ^d$ preserve aperiodicity, see Theorem \ref{thm:discrete}. This latter theorem is general, and might be of independent interest. Then, we use this theorem to construct  $(d+2)$-dimensional ``folded briges'' between the connected components of a given aperiodic tile $F\subset \ZZ^d$, while preserving its aperiodicity. Finally, we inflate the obtained $(d+2)$-dimensional aperiodic tile, to get an aperiodic connected tile in $\RR^{d+2}$.

\subsection{Tiling and spectrality} A measurable set $\Omega\subset \RR^d$ of positive,
 finite measure is called \emph{spectral} if there is a frequency set $\Lambda\subset \RR^d$ such that the system
$$E(\Lambda)\coloneqq \{e^{2\pi i \lambda\cdot x}\}_{\lambda\in\Lambda}
$$
constitutes an orthogonal basis for $L^2(\Omega)$.
In this case, the set $\Lambda$ is called \emph{a spectrum for $\Omega$}.

The study of spectral sets goes back to Fuglede \cite{fug}, who in 1974 considered the question of the existence of commuting extensions to $L^2(\Omega)$ of the partial differentiation operators defined on $C_c(\Omega)$, where $\Omega$ is an open set -- a question where the notion of spectral sets  arose naturally. He conjectured that spectral sets are exactly the ones which tile by translations:

\begin{conjecture}[Fuglede's spectral sets conjecture]\label{Fug}
An open set $\Omega\subset \RR^d$ of finite, positive measure is spectral if and only if it tiles space by translations.
\end{conjecture}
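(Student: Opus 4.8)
The plan is to attempt the biconditional through the Fourier-analytic bridge that both hypotheses share, namely the zero set of the Fourier transform of the indicator $\one_\Omega$. Write $\mu=\one_\Omega$ and $Z(\ft\mu)=\Set{\xi\in\RR^d:\ft\mu(\xi)=0}$. First I would record that both conditions are governed by $Z(\ft\mu)$: a discrete set $A$ satisfies $\Omega\oplus A=\RR^d$ precisely when $\dens(A)=1/\Abs{\Omega}$ and $\supp\ft{\delta_A}\setminus\Set0\subseteq Z(\ft\mu)$, where $\delta_A=\sum_{a\in A}\delta_a$; while $\Lambda$ is a spectrum of $\Omega$ precisely when $\Lambda-\Lambda\subseteq\Set0\cup Z(\ft\mu)$ (orthogonality) together with $\dens(\Lambda)=\Abs{\Omega}$ (completeness). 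Thus each direction of the conjecture amounts to transferring a zero-set/packing configuration from one discrete set to another of reciprocal density.

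Second, I would settle the lattice case, where the strategy genuinely works. If $\Omega\oplus\Lambda=\RR^d$ for a lattice $\Lambda$, then Poisson summation applied to $\sum_{\lambda\in\Lambda}\one_\Omega(x-\lambda)=1$ forces $\ft\mu$ to vanish at every nonzero point of the dual lattice $\Lambda^\ast$. Hence $\Lambda^\ast-\Lambda^\ast=\Lambda^\ast\subseteq\Set0\cup Z(\ft\mu)$, and $\dens(\Lambda^\ast)=\text{covol}(\Lambda)=\Abs{\Omega}$, so $\Lambda^\ast$ is a spectrum; the same computation read in reverse gives the converse. This disposes of the conjecture whenever the tiling, or the spectrum, may be taken to be a lattice, and in particular recovers Fuglede's original lattice theorem.

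Third, and this is the crux, I would try to discard the lattice hypothesis: given an arbitrary, possibly non-periodic, tiling complement $A$, manufacture a frequency set $\Lambda$ with $\Lambda-\Lambda\subseteq\Set0\cup Z(\ft\mu)$ and $\dens(\Lambda)=\Abs{\Omega}$, and symmetrically pass from an arbitrary spectrum back to a tiling. The main obstacle lives exactly here. The zero set $Z(\ft\mu)$ of a general finite-measure set carries no a priori group or lattice structure, so nothing forces a tiling set and a spectral set to be built from the same configuration; one would have to assemble the required difference set inside $Z(\ft\mu)$ by hand. I expect this step to be not merely hard but impossible in general — it is precisely the point at which a construction like the present paper's produces a set that tiles but admits no orthogonal exponential basis (or conversely), even under the additional demand of connectedness. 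Consequently a correct proof of the stated biconditional can only go through under an extra structural hypothesis (convexity, dimension $d\le 2$, or a periodicity/lattice assumption on the tiling or spectrum), which makes the transfer in the third step possible; absent such a hypothesis the ``if and only if'' fails in sufficiently high dimension.
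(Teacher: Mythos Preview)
The statement you were given is Conjecture~\ref{Fug}, not a theorem; the paper does not prove it and indeed cannot, since the conjecture is known to be false in dimension $d\ge 3$ (Tao \cite{T04}, Kolountzakis--Matolcsi \cite{KM06,KM2}). The paper's contribution is in the opposite direction: it shows that \emph{connected} counterexamples exist as well (Theorems~\ref{main:spectralnottile} and~\ref{main:tilenotspectral}). So there is no ``paper's own proof'' to compare against.

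That said, your write-up is not really a proof attempt but an honest diagnosis, and as such it is accurate. Your first two steps are standard and correct: the Fourier-analytic reformulation via the zero set $Z(\ft{\one_\Omega})$ is exactly the framework recorded in the paper around \eqref{eq:orthogonal} and Proposition~\ref{spectrality}, and the lattice case via Poisson summation is Fuglede's original observation, cited in the paper at \cite{fug} and in \S\ref{sec:open}. Your third step correctly isolates the genuine obstruction --- there is no mechanism to pass from an arbitrary tiling complement to a spectrum or back --- and you are right that this is precisely where counterexamples live. So the ``gap'' you identify is not a gap in your argument but a gap in the conjecture itself, and your conclusion that the biconditional only survives under extra hypotheses (lattice, convexity, low dimension) matches the state of the art summarized in the paper's introduction and \S\ref{sec:open}.
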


Fuglede's conjecture motivated an extensive study of the nature of the connection between the two properties: The analytic property of spectrality and the geometric property of tiling by translations. Throughout the years many positive results towards the conjecture have been obtained, see \cite[Section 4]{KM10} and the references mentioned there. In particular, the conjecture is known to hold for convex domains in all dimensions \cite{IKT,GL17,LM}. Nevertheless, in 2004, Tao discovered that there  exist counterexamples to Fuglede's conjecture. In \cite{T04}, he constructed  examples of 
sets $\Omega\subset\RR^d$, for any $d\geq 5$, which are spectral, but cannot tile by translations. Subsequently, by an enrichment of Tao's approach, examples of translational tiles which are not spectral  were also constructed, and eventually the dimension in these examples was reduced down to $d\ge 3$ \cite{KM06,KM2} (see \cite[Section 4]{KM10} for more references). All these examples arise from constructions of counterexamples to the {\it finite Abelian group} formulation of Fuglede's conjecture. Thus, when inflated to Euclidean space $\RR^d$, $d\geq 3$, each of  the known counterexamples is a finite union of  unit cubes centered at points of the integer lattice $\ZZ^d$. However, since in all the previously known examples the arrangement of the cubes is very sparse and disconnected, Fuglede's conjecture for connected sets remained open. In this paper we show that there are connected counterexamples to both directions of the conjecture. 

In Section \ref{sec:spectralnottile}, from a given disconnected set in $\RR^d$ which is spectral and does not tile, we construct  a connected set in $\RR^{d+2}$ which is spectral and does not tile:

\begin{theorem}\label{main:spectralnottile}
    For $d\geq 5$, there exists an open set $\Omega$ in $\RR^d$ of  finite measure, such that:
    \begin{enumerate-math}
   \item $\Omega$ is connected.
        \item $\Omega$ is spectral.
        \item  $\Omega$ does not tile $\RR^d$ by translations.
    \end{enumerate-math}
\end{theorem}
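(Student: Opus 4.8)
The plan is to mirror the strategy behind Theorem~\ref{main:aperiodic}, replacing ``aperiodic translational tile'' by ``spectral set that is not a translational tile'' throughout, and in particular to establish an analogue of the discrete connectification result Theorem~\ref{thm:discrete}. First I would pass to a discrete problem. Put $n = d-2$. By the constructions of Tao and of Kolountzakis--Matolcsi \cite{T04,KM06,KM2} (together with the elementary fact that a counterexample in $\RR^n$ yields one in $\RR^{n+1}$ upon taking a product with an interval), for every $n\ge 3$ there is a finite set $F_0\subset\ZZ^n$ such that the union of unit cubes $\Omega_0 = F_0+[0,1)^n$ is spectral but does not tile $\RR^n$ by translations; moreover these examples originate from the finite abelian group formulation, so $\Omega_0$ carries an explicit spectrum $\Lambda_0\subset\RR^n$ of arithmetic type (a finite union of cosets of $\ZZ^n$, rescaled by $1/N$). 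It then suffices to construct a \emph{face-connected} finite set $F'\subset\ZZ^d$ for which $\Omega' = F'+[0,1)^d$ is still spectral and still does not tile $\RR^d$: the inflated set $\Omega = \Inn\big(\overline{\bigcup_{a\in F'}(a+[0,1]^d)}\big)$ is then open, of finite measure, agrees with $\Omega'$ outside a null set (so it inherits spectrality and non-tiling), and is connected because the interior of a face-connected union of closed unit cubes is connected. Since $n\ge 3$ amounts to $d\ge 5$, this yields the theorem.

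The core is a discrete connectification in the spirit of Theorem~\ref{thm:discrete}: a suitable ``folded bridge'' operation --- which appends unit cubes in the two new coordinate directions in order to join the connected components of $F_0\times\{(0,0)\}$ inside $\ZZ^d=\ZZ^n\times\ZZ^2$ into a single face-connected set $F'$ --- preserves both spectrality and the failure of the tiling property. For the \textbf{non-tiling} part I would argue by contradiction and show that a tiling $\Omega'\oplus A=\RR^d$ would force a tiling of $\Omega_0$ by translations of $\RR^n$, contrary to the choice of $F_0$; concretely, one aims to establish that $\Omega'$ tiles if and only if the disconnected ``slab'' $\Omega_0\times[0,1)^2$ tiles, if and only if $\Omega_0$ tiles. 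This is where the two extra dimensions are spent: the bridges are \emph{folded} into a bounded region of the $\ZZ^2$-coordinates and routed so that, upon intersecting the tiling with an appropriate affine subspace $\RR^n\times\{(t_1,t_2)\}$ (or slicing and projecting suitably), the translates of $\Omega'$ that matter reduce to translates of the body $\Omega_0$ --- the bridge cubes either miss the relevant subspace or are forced, by the same tiling relation, to tile among themselves and can be removed.

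For the \textbf{spectrality} part I would produce an explicit orthogonal basis of exponentials for $L^2(\Omega')$. The body contributes a lift of $\Lambda_0$, tensored with $\ZZ^2$ in the two new directions; each folded bridge, being assembled out of ``sticks'' --- products of an interval with a unit cube, which are individually spectral with lattice spectra and tile their own slab --- contributes a lattice-type piece of frequencies. One then checks that the resulting set $\Lambda'\subset\RR^d$ is both orthogonal on $\Omega'$ and complete, most cleanly via the Fourier-tiling criterion $\sum_{\lambda\in\Lambda'}\big|\widehat{\one_{\Omega'}}(\xi-\lambda)\big|^2\equiv\vol(\Omega')^2$, which simultaneously encodes orthogonality (taking $\xi$ a difference of two elements of $\Lambda'$) and completeness.

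I expect \textbf{preservation of spectrality} to be the main obstacle. Aperiodicity, as used for Theorem~\ref{main:aperiodic}, is a soft combinatorial property, stable under a broad range of surgeries on $F$; spectrality is a rigid global condition --- existence of an orthogonal exponential basis --- that is typically destroyed the moment material is appended to a set. The real content is therefore to design the folded bridges to be \emph{spectrally transparent}: built from pieces whose individual spectra can be fitted into one set $\Lambda'$ without breaking orthogonality against the body or spoiling completeness. The two extra dimensions are precisely what make this possible, affording at the same time the room to route the connecting bridges and the room to absorb each bridge into a product/lattice structure compatible with $\Lambda_0$. Once this discrete statement is in hand, passing to a continuous connected counterexample in $\RR^d$ is routine, exactly as in part~(a).
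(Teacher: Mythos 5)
Your outline matches the paper's at the level of strategy (two extra dimensions, a folded bridge, a slicing argument for the non-tiling direction, a product spectrum for spectrality), and you correctly identify preservation of spectrality as the crux. But the construction you actually describe --- the body $\Omega_0\times[0,1)^2$ with ``sticks'' of unit cubes \emph{appended} in the two new coordinates to join its components, followed by an attempt to merge the body's spectrum with lattice spectra of the individual bridge pieces --- has a genuine gap exactly at that crux. A union of disjoint spectral sets is not in general spectral, and nothing in your sketch forces the bridge frequencies to be orthogonal to the body's exponentials on all of $\Omega'$, nor forces completeness; the identity $\sum_{\lambda\in\Lambda'}|\ft{\one_{\Omega'}}(\xi-\lambda)|^2\equiv|\Omega'|^2$ is the right criterion, but you give no reason it should hold for such a $\Lambda'$. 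The same problem infects the non-tiling step: if the slice $\RR^n\times\{(t_1,t_2)\}$ meets both body copies and isolated bridge cubes, the claim that the bridge cubes ``tile among themselves and can be removed'' is unjustified, and a mixed tiling of the slice by translates of $\Omega_0$ and of single cubes does not yield a tiling by $\Omega_0$ alone.

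The key idea your proposal is missing is that \emph{nothing is appended}: the paper sets $\Omega'=\bigl(\Omega\times[0,1]^2\bigr)\oplus X$ with $X=\{(v_j,s_j)\}_{j=0}^{2n-1}$, where the $s_j$ enumerate a $2\times n$ rectangle $S\subset\ZZ^2$ (so $S$ tiles $\ZZ^2$ and $S+[0,1]^2$ is spectral) and the $v_j\in\RR^d$ trace a path of small steps visiting every component of $\Omega$, with consecutive translates overlapping. The entire body is thus replicated $2n$ times, and the ``bridge'' is the snake of body copies itself. Then $\one_{\Omega'}=\one_{\Omega\times[0,1]^2}*\sum_{x\in X}\delta_x$, so $\ft{\one_{\Omega'}}$ factors, and $\Lambda'=\Lambda\times\Sigma$ is a spectrum: for $\lambda\neq\lambda'$ the factor $\ft{\one_\Omega}(\lambda'-\lambda)$ vanishes, while for $\lambda=\lambda'$ the exponential sum over $X$ collapses to $\sum_{s\in S}e^{2\pi i s\cdot(\sigma'-\sigma)}$ because the first $d$ coordinates of the frequency are zero --- the $v_j$ are invisible to the orthogonality computation --- and the density count closes via Proposition \ref{spectrality}. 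Likewise every intersection $(\Omega'+a')\cap(\RR^d\times\{0\}^2)$ is a union of translates of $\Omega\times\{0\}^2$, so a tiling by $\Omega'$ restricts to a tiling by $\Omega$ with no stray cubes to dispose of. With this form of $\Omega'$ your dimension bookkeeping ($n=d-2\geq 3$, hence $d\geq 5$) and the passage to an open connected set are fine.
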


Similarly to the construction in Section \ref{sec:aperiodic}, this is done by constructing ``folded bridges'' in $\RR^{d+2}$ between the connected components of  a given spectral set in $\RR^d$ which is not a tile. We prove in Theorem \ref{thm:general} that this type of construction preserves spectrality as well as the tiling properties of the original set.

In Section \ref{sec:tilenotspectral}, we construct, from a given disconnected set  $\Omega\subset \RR^d$ which tiles and  is not spectral, a connected set in $\RR^{\tilde d}$, $\tilde d=\tilde d (\Omega) >d$, which tiles and  is not spectral:

\begin{theorem}\label{main:tilenotspectral}
    For sufficiently large $d$, there exists an open set $\Omega$ in $\RR^d$ of  finite measure, such that:
    \begin{enumerate-math}
    \item $\Omega$ is connected.
        \item $\Omega$ tiles $\RR^d$ by translations.
        \item  $\Omega$ is not spectral.
    \end{enumerate-math}
\end{theorem}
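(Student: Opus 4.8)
The plan is to carry out, for the ``tiling $\implies$ spectral'' direction, the same connectification strategy used in Sections \ref{sec:aperiodic} and \ref{sec:spectralnottile}, starting now from one of the known disconnected counterexamples. First I would fix such an example $\Omega_0\subset\RR^{d_0}$, say a Kolountzakis--Matolcsi example with $d_0\ge 3$: after inflation it is a finite union of unit cubes, $\Omega_0=F+[0,1)^{d_0}$ for a finite $F\subset\ZZ^{d_0}$ which tiles $\ZZ^{d_0}$ by translations and whose cube complex is not spectral. The aim is to join the finitely many connected components of $\Omega_0$ by ``folded bridges'', as in the previous sections, so that the result $\Omega'\subset\RR^{\tilde d}$ is connected, is the closure of its interior (which forces each bridge to be glued along full codimension-one cube faces), still tiles, and is still non-spectral; inflating back a suitable discrete connected non-spectral tile in $\ZZ^{\tilde d}$ then gives the open connected finite-measure set demanded by the theorem.

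Keeping $\Omega'$ a tile is the part that is essentially already done: by the tiling-preservation assertion in Theorem \ref{thm:general} (whose proof in turn rests on the discrete mechanism of Theorem \ref{thm:discrete}), attaching a folded bridge to a tile produces a tile. The new issue is non-spectrality. Theorem \ref{thm:general} provides the implication ``$\Omega_0$ spectral $\implies$ bridged set spectral'', which is the opposite of what is useful here; for this theorem I need the \emph{converse}, ``$\Omega'$ spectral $\implies\Omega_0$ spectral'', whose contrapositive yields that $\Omega'$ is not spectral. To obtain the converse one cannot, in general, afford to fold all the bridges into a single extra pair of coordinates (which is what keeps the dimension at $d_0+2$ in Theorem \ref{main:spectralnottile}, where only the ``easy'' spectral direction is needed); instead I would \emph{iterate} the bridge construction, attaching the bridges one at a time and assigning each bridge its own fresh pair of coordinates. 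With the bridges made mutually ``transversal'' in this way, a hypothetical spectrum $\Lambda'$ of $\Omega'$ can be peeled apart bridge by bridge: slicing $\Lambda'$ over the slab that carries the embedded copy of $\Omega_0$, and using that the partial Fourier transform of each (folded) bridge is nonvanishing on the relevant frequency fibres, one recovers after finitely many steps an orthogonal exponential basis of $L^2(\Omega_0)$, contradicting the non-spectrality of $\Omega_0$. Since one needs roughly one bridge per extra connected component, this is exactly why the target dimension must grow with $\Omega_0$; a convenient value is $\tilde d=d_0+2\bigl(c(\Omega_0)-1\bigr)$, where $c(\Omega_0)$ is the number of components. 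This is precisely ``iterating the method of the previous two problems'': the bridges and the spectral bookkeeping come from Section \ref{sec:spectralnottile}, while re-establishing, at each stage, that the intermediate set is still a tile (so that the next bridge can be added) re-uses the tiling analysis from Section \ref{sec:aperiodic}.

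The hard part will be the converse spectral implication: showing that the bridges cannot conspire with $\Omega_0$ and with one another to create an orthogonal exponential basis that was not already present. Technically this reduces to two things -- that slicing a putative spectrum of $\Omega'$ at the core slab yields a \emph{complete} orthogonal system for $L^2(\Omega_0)$, and that the relevant partial Fourier transforms of the folded bridges do not vanish where they must not -- and it is for these that the one-bridge-at-a-time, fresh-coordinates design is essential; this is also where most of the (routine but bookkeeping-heavy) computation lies. The remaining points -- that $\Omega'$ tiles, that it is face-connected as a cube complex, that it equals the closure of its interior, and that it has finite measure -- follow from Theorem \ref{thm:general} and from the explicit shape of the construction, just as in Sections \ref{sec:aperiodic} and \ref{sec:spectralnottile}.
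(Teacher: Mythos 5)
There is a genuine gap, and it sits exactly at the point you flag as ``the hard part''. Your plan is to attach folded bridges (the construction of Sections \ref{sec:aperiodic} and \ref{sec:spectralnottile}) one at a time, each in a fresh pair of coordinates, and then to prove the converse spectral implication ``$\Omega'$ spectral $\implies$ $\Omega$ spectral'' for a single folded bridge by slicing a putative spectrum. But the paper states explicitly, at the start of Section \ref{sec:tilenotspectral}, that this converse to Theorem \ref{thm:general}(ii) is an \emph{open problem}; the entire reason that section introduces a different construction is that nobody knows how to prove the converse for folded bridges. The obstruction is visible in \eqref{eq:ftOmega'}: the zeros of $\ft{\one_{\Omega'}}$ beyond those of $\ft{\one_{\Omega\times[0,1]^k}}$ are the zeros of the exponential sum $\sum_{j} e^{2\pi i (v_j,s_j)\cdot\xi}$, and for a folded bridge, where the $v_j$ trace out an essentially arbitrary path, this zero set has no useful group structure. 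A spectrum $\Lambda'$ of $\Omega'$ may place its difference vectors inside this set, and your ``peeling/slicing'' sketch provides no mechanism for extracting from $\Lambda'$ an orthogonal system of the correct density whose differences land in $\{\ft{\one_\Omega}=0\}$. Giving each bridge its own pair of coordinates does not help, since the difficulty is already present for a single bridge; consequently your dimension count $\tilde d = d_0 + 2\bigl(c(\Omega_0)-1\bigr)$ is not justified.

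What the paper does instead is replace the folded bridge by a \emph{stacking}: $\Omega' = \Omega\times[0,1] + \{0,u,2u,\dots,(n-1)u\}$ with $u=(v,1)$, so that the translation set is an arithmetic progression. Then the exponential sum is a geometric series, its zero set is exactly a union $D$ of nontrivial cosets of $H=\{\xi: u\cdot\xi\in\ZZ\}$ inside $G=\{\xi: u\cdot\xi\in\tfrac1n\ZZ\}$, and a pigeonhole argument (Theorem \ref{thm:stacking}) selects from any spectrum $\Lambda'$ of $\Omega'$ a subset of density at least $\tfrac1n\dens{\Lambda'}=|\Omega\times[0,1]|$ whose differences avoid $D$; combined with Proposition \ref{spectrality} and the cylindric-domain result of \cite{GL16}, this yields the needed converse. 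Connectivity is then achieved not in one step but by iteration: each stacking raises the dimension by one and shrinks the distance between two chosen components by a definite factor (at most $0.943$), and one stacks until that distance drops below $2$ and the components merge (the ``spiral bridge''). This is why the final dimension depends on the geometry of $\Omega$ (essentially on the distances between components), not merely on the number of components. To salvage your argument you would either have to prove the converse for folded bridges, which would resolve the open problem the paper records, or switch to the arithmetic-progression stacking.
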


The proof is done by iteratively  constructing high dimensional ``spiral bridges'' between the connected components of $\Omega$, a given finite union of unit cubes which tiles and is not spectral. In Theorem \ref{thm:stacking} we prove that this type of construction preserves the non-specrality as well as the tiling properties of the original set.

Theorems \ref{thm:general} and \ref{thm:stacking} give a range of operations on a set that preserve its spectral and tiling properties. These theorems may, therefore, be of independent interest.

\subsection{Notation and preliminaries} Throughout this paper:
\begin{itemize}
    \item We denote the Euclidean norm by
$$\Norm{\,\cdot\,}\colon \RR^d\to [0,\infty).$$ 
\item  We denote the Lebesgue measure of a set $\Omega \subset \RR^d$ by $|\Omega|$, and for a set $F\subset \ZZ^d$, $|F|$ denotes the cardinality of $F$, or, equivalently, the counting measure of $F$.
\item For a number $r\in\RR$, $\Floor{r}\in \ZZ$ denotes the largest integer which is smaller or equal to $r$, and $\Ceil{r}$  denotes the smallest integer which is greater or equal to $r$.
\item For sets $A,B$ in a group $G$, we use the notation $A+B$ for Minkowski addition: $$\{a+b\colon a\in A,b\in B\}$$
of $A$ and $B$.  For  $A\subset G$ and $B\subset G'$ the set $A\times B \subset G\times G'$ is the Cartesian product:
$$\{(a,b)\colon a\in A,b\in B\}
$$
of $A$ and $B$.
\item For a function $f\colon \RR^d\to \CC$ we denote
$$\{f=0\}\coloneqq \{\xi \in\RR^d \colon f(\xi) =0 \}.
$$
\end{itemize}

\subsubsection{} Let $\Lambda\subset \RR^d$ be  a countable set and let $\Omega\subset \RR^d$ be measurable with positive, finite measure. Observe that the system $E(\Lambda)=\{e^{2 \pi i \lambda\cdot x}\}_{\lambda\in\Lambda}$ is orthogonal in $L^2(\Omega)$ if and only if 
\begin{equation}\label{eq:orthogonal}
            \pp{\Lambda -\Lambda} \setminus\{0\}\subset \{\ft{\one_\Omega}=0\}.
        \end{equation}

The \emph{upper   density of $\Lambda$} is defined as the quantity
$$\limsup_{R\to \infty} \sup_{x\in \RR^d} \frac{|\Lambda\cap (x + [-R/2,R/2]^d)|}{R^d}
$$
and the \emph{lower   density of $\Lambda$} is defined as 
$$\liminf_{R\to \infty} \sup_{x\in \RR^d} \frac{|\Lambda\cap (x + [-R/2,R/2]^d)|}{R^d}.
$$
If the upper density of $\Lambda$ is equal to its lower   density, we denote both quantities by $\dens{\Lambda}$ and say that $\Lambda$ has \emph{density} $\dens{\Lambda}$.

The following proposition is well known in the study of spectral sets. It will be used in the proofs of Theorems \ref{thm:general} and \ref{thm:stacking}.

\begin{proposition}\label{spectrality}
    Let $\Omega\subset \RR^d$ be a  measurable set  of positive, finite measure. The following are equivalent:
    \begin{enumerate-math}
        \item $\Omega$ is spectral.
        \item There exists   $\Lambda\subset \RR^d$ of upper density at least $|\Omega|$ such that \eqref{eq:orthogonal} is satisfied.
    \end{enumerate-math}
      Moreover, if $\Lambda\subset \RR^d$ is a spectrum for $\Omega$ then $\Lambda$ satisfies \eqref{eq:orthogonal} and $\dens{\Lambda}=|\Omega|$.
\end{proposition}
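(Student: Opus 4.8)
The whole proposition rests on two elementary facts: the identity $\langle e^{2\pi i\xi\cdot x},e^{2\pi i\lambda\cdot x}\rangle_{L^2(\Omega)}=\ft{\one_\Omega}(\lambda-\xi)$, and Plancherel's theorem $\int_{\RR^d}|\ft{\one_\Omega}|^2=|\Omega|<\infty$ (so that $\int_{\RR^d\setminus[-T,T]^d}|\ft{\one_\Omega}|^2\to 0$ as $T\to\infty$). Writing $S_\Lambda(\xi):=\sum_{\lambda\in\Lambda}|\ft{\one_\Omega}(\xi-\lambda)|^2$, the condition \eqref{eq:orthogonal} — orthogonality of $E(\Lambda)$ — is, after normalizing the exponentials to unit norm, exactly Bessel's inequality $S_\Lambda(\xi)\le|\Omega|^2$ for all $\xi$, and $E(\Lambda)$ being an orthonormal basis of $L^2(\Omega)$ is exactly Parseval's identity $S_\Lambda(\xi)=|\Omega|^2$ for all $\xi$. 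First I would prove the a priori bound: if $E(\Lambda)$ satisfies \eqref{eq:orthogonal} then the upper density of $\Lambda$ is at most $|\Omega|$. Integrate $S_\Lambda(\xi)\le|\Omega|^2$ over a cube $Q$ of side $R$ centred at $x_0$; the left side equals $\sum_{\lambda\in\Lambda}\int_{x_0-\lambda+[-R/2,R/2]^d}|\ft{\one_\Omega}|^2$, and for every $\lambda$ in the concentric cube $Q'$ of side $R-2M$ this integral is at least $\int_{[-M,M]^d}|\ft{\one_\Omega}|^2=|\Omega|-o_M(1)$, so $(|\Omega|-o_M(1))\,|\Lambda\cap Q'|\le|\Omega|^2R^d$; letting $R\to\infty$ and then $M\to\infty$, uniformly in $x_0$, gives the claim.

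For (i)$\Rightarrow$(ii) and the ``moreover'' clause: a spectrum $\Lambda$ gives an orthonormal basis, so $\Lambda$ satisfies \eqref{eq:orthogonal} and $S_\Lambda\equiv|\Omega|^2$; integrating this equality over $Q$ and bounding the contribution of the $\lambda$ lying outside a slightly larger cube by the Plancherel tail (legitimate because $\Lambda$ is now known to be locally uniformly bounded, as just shown) produces the matching lower bound $|\Lambda\cap Q|\ge(|\Omega|-o(1))R^d$. Hence a spectrum satisfies \eqref{eq:orthogonal} and has density $|\Omega|$; in particular (i)$\Rightarrow$(ii) is immediate.

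The substantive direction is (ii)$\Rightarrow$(i). Given \eqref{eq:orthogonal} together with upper density $\ge|\Omega|$, the a priori bound forces the upper density to equal $|\Omega|$. One should not expect $E(\Lambda)$ itself to be complete — e.g. for $\Omega=(0,1)$ the set obtained from $\ZZ$ by deleting every block $\{2^{2k+1},\dots,2^{2k+2}\}$ satisfies \eqref{eq:orthogonal} and has upper density $1$, yet is not a spectrum — so I would manufacture a spectrum by a translation--limit argument. Since $\ft{\one_\Omega}$ is continuous with $\ft{\one_\Omega}(0)=|\Omega|>0$, the zero set $\{\ft{\one_\Omega}=0\}$, which contains $(\Lambda-\Lambda)\setminus\{0\}$, avoids a ball about the origin; thus $\Lambda$ is uniformly discrete, and any sequence of translates of $\Lambda$ has a subsequence converging, in the local (Chabauty--Fell) topology, to a uniformly discrete set $\Lambda_\infty$, which still satisfies \eqref{eq:orthogonal} by continuity of $\ft{\one_\Omega}$. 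The role of the density hypothesis is to let one choose translates $x_n$ and radii $N_n\to\infty$ so that the \emph{locally captured} mass $\sum_{\lambda\in\Lambda\cap(x_n+[-N_n,N_n]^d)}|\ft{\one_\Omega}(\xi-\lambda+x_n)|^2$ tends to $|\Omega|^2$ for $\xi$ ranging over an exhaustion of $\RR^d$ by cubes; this follows by averaging $S_\Lambda$ over large cubes exactly as in the previous step, which shows the supremum of these local captures equals $|\Omega|^2$. Passing to the limit along a diagonal subsequence yields $S_{\Lambda_\infty}\equiv|\Omega|^2$, and since $\{e^{2\pi i\xi\cdot x}\}_{\xi\in\RR^d}$ is total in $L^2(\Omega)$ and $\xi\mapsto e^{2\pi i\xi\cdot x}$ is $L^2(\Omega)$-continuous, the identity $S_{\Lambda_\infty}\equiv|\Omega|^2$ forces $E(\Lambda_\infty)$ to be a complete orthogonal system. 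Hence $\Lambda_\infty$ is a spectrum and $\Omega$ is spectral.

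The delicate point, and the one I expect to require most care, is this last passage to the limit: one has to guarantee that no part of the mass $S_\Lambda(\xi)$ escapes to infinity, whereas for a general set $\Omega$ of finite measure $\ft{\one_\Omega}$ is only known to vanish at infinity with no rate. This is handled quantitatively — Bessel caps the total mass at $|\Omega|^2$, the near-maximality of the captured mass forces the mass outside the cube of side $N_n$ to be negligible, and $N_n$ is taken to infinity slowly enough, along a diagonalization, that all the estimates stay uniform in $\xi$ over the exhausting cubes. Everything else (the two cube integrations and the Chabauty--Fell compactness) is routine.
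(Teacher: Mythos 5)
Your argument is correct and is essentially a self-contained reconstruction of exactly what the paper invokes: the paper proves Proposition \ref{spectrality} by citing \cite[Section 3.1]{K04} (or \cite[Lemma 3.1]{GL20}) for the density statements — which are obtained, as you do, by integrating the Bessel/Parseval identity for $\sum_\lambda|\ft{\one_\Omega}(\xi-\lambda)|^2$ over large cubes — together with \cite[Theorem 1]{K16} for the substantive implication (ii)$\Rightarrow$(i), whose proof is precisely your uniform-discreteness plus translation/weak-limit argument upgrading a packing at the critical density to a genuine spectrum. The only nitpick is that orthogonality \emph{implies} (rather than ``is exactly'') the Bessel bound $\sum_\lambda|\ft{\one_\Omega}(\xi-\lambda)|^2\le|\Omega|^2$, but since you only ever use that implication, this does not affect the proof.
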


The proof of Proposition \ref{spectrality} follows by combining \cite[Section 3.1]{K04} or \cite[Lemma 3.1]{GL20} with \cite[Theorem 1]{K16}.

\subsection{Acknowledgment}
R.G.\ was  supported by the National Science Foundation  grants  DMS-2242871, DMS-1926686 and by the  Association of Members of the Institute for Advanced Study. M.K.\ was supported by the Hellenic Foundation for Research and Innovation, Project HFRI-FM17-1733 and by University of Crete Grant 4725. We thank Terence Tao for helpful suggestions to improve the exposition of the paper. We are grateful to Sha Wu of Hunan University for pointing out an error in the original ``folded bridge'' construction which led us to a much simplified ``folded bridge''. We thank the referee for helpful comments.

\section{Aperiodic connected tiles}\label{sec:aperiodic}

\begin{theorem}[Aperiodicity preserving operation]\label{thm:discrete}
    Let $F$ be a finite subset of $\ZZ^d$. Define the finite set
$$
X = \Set{(v_j,s_j): j=0, 1, \ldots, n-1} \subseteq \ZZ^{d+k}
$$
where $v_0,\dots,v_{n-1}\in\ZZ^d$ are arbitrary and $s_0,\dots,s_{n-1}$ are $n$ distinct points in  $\ZZ^k$ such that $$S=\{s_j\colon j=0, 1, \ldots, n-1\}$$ tiles $\ZZ^k$ by translations. Let $F' = \left( F\times \{0\}^k \right) \oplus X$.  Then  $F'$ is an aperiodic tile in $ \ZZ^{d+k}$ if $F$ is an aperiodic tile of $\ZZ^d$.
 \end{theorem}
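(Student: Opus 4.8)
The plan is to verify separately that $F'$ tiles $\ZZ^{d+k}$ and that none of its tilings is periodic, working throughout with the explicit ``layered'' description
$$ F' \;=\; \bigcup_{j=0}^{n-1} (F+v_j)\times\{s_j\} \;\subseteq\; \ZZ^{d+k}, $$
in which the translate $F+v_j$ of $F$ sits at ``height'' $s_j\in\ZZ^k$. That $(F\times\{0\}^k)\oplus X$ really is a direct sum, so that $F'$ has the displayed form with no overlaps and $|F'|=n|F|$, is immediate from the distinctness of the $s_j$, and I would record it first.

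For the tiling claim I would take a tiling complement $A$ of $F$ in $\ZZ^d$ (so $F\oplus A=\ZZ^d$) and a set $B$ with $S\oplus B=\ZZ^k$, which exists by hypothesis, and check directly that $F'\oplus(A\times B)=\ZZ^{d+k}$: given $(x,y)$, the height $y$ has a unique representation $y=s_j+b$ with $j\in\{0,\dots,n-1\}$ and $b\in B$ (using $S\oplus B=\ZZ^k$ together with the distinctness of the $s_j$), and then $x\in F+v_j+a$ has a unique solution $a\in A$; hence $(x,y)$ lies in exactly one translate $F'+(a,b)$ with $(a,b)\in A\times B$. This shows $F'$ is a tile of $\ZZ^{d+k}$.

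The heart of the matter is aperiodicity. Suppose, for contradiction, that $F'\oplus C=\ZZ^{d+k}$ with $C+\Lambda=C$ for some full-rank sublattice $\Lambda\subseteq\ZZ^{d+k}$. Since $\ZZ^{d+k}/\Lambda$ is finite of some order $N$ we have $N\ZZ^{d+k}\subseteq\Lambda$, hence in particular $C+(N\ZZ^d\times\{0\}^k)=C$. The key step is to slice the tiling at a fixed height, say $t=0$: a translate $F'+(u,w)$, $(u,w)\in C$, meets the layer $\ZZ^d\times\{t\}$ precisely when $t-w\in S$, and then it meets it in the single set $(F+v_j+u)\times\{t\}$, where $j$ is the unique index with $s_j=t-w$ — here the distinctness of the $s_j$ is exactly what forces ``a single set''. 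Restricting the partition $F'\oplus C=\ZZ^{d+k}$ to this layer therefore yields
$$ F\oplus C_t=\ZZ^d, \qquad C_t:=\Set{v_j+u:(u,w)\in C,\ j\in\{0,\dots,n-1\},\ s_j=t-w}. $$
Finally, $C+(N\ZZ^d\times\{0\}^k)=C$ passes to $C_t+N\ZZ^d=C_t$: if $(u,w)\in C$ contributes $v_j+u$ to $C_t$, then $(u+Nm,w)\in C$ contributes $v_j+u+Nm$ for every $m\in\ZZ^d$. Since $N\ZZ^d$ is a full-rank sublattice of $\ZZ^d$, the set $C_t$ would then be a periodic tiling of $\ZZ^d$ by $F$, contradicting the aperiodicity of $F$. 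Hence no tiling of $\ZZ^{d+k}$ by $F'$ is periodic, and $F'$ is an aperiodic tile.

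I do not expect a genuine obstacle: the construction of $F'$ is tailored precisely so that every translate of $F'$ hits each horizontal layer $\ZZ^d\times\{t\}$ in exactly one translate of $F$, which makes an arbitrary $F'$-tiling of $\ZZ^{d+k}$ slice into $F$-tilings of $\ZZ^d$ and makes any full-rank periodicity descend. The only things needing care are the routine bookkeeping in the direct-sum and partition verifications and the standard reduction from an arbitrary full-rank period lattice to $N\ZZ^{d+k}$. It is perhaps worth flagging that the hypothesis ``$S$ tiles $\ZZ^k$'' is used only in the first step, to guarantee that $F'$ tiles at all; the descent of periodicity in the last step is valid for the layered set $F'$ irrespective of whether $S$ tiles.
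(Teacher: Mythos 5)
Your proof is correct and follows essentially the same strategy as the paper: slice the hypothetical periodic tiling of $\ZZ^{d+k}$ by $F'$ along a fixed horizontal layer $\ZZ^d\times\{t\}$, observe that each translate of $F'$ meets that layer in exactly one translate of $F$ (by distinctness of the $s_j$), and conclude that the induced $F$-tiling of $\ZZ^d$ inherits a full-rank period, contradicting aperiodicity. The only cosmetic difference is how the period descends: the paper intersects the period lattice with $\ZZ^d\times\{0\}^k$ and invokes an index inequality to see this is still full rank, whereas you pass to the sublattice $N\ZZ^{d+k}$ first; both are fine.
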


\begin{remark}\label{lattice}
If $H, K$ are subgroups of $G$ then $[H:H\cap K] \le [G:K]$. This implies that if $\Lambda \subseteq \ZZ^m\times\ZZ^n$ is a lattice then $\Lambda \cap \ZZ^m\times\Set{0}^n$ is a lattice in $\ZZ^m\times\Set{0}^n$.
\end{remark}

\begin{proof}[Proof of Theorem \ref{thm:discrete}.]

Suppose that $F \subset \ZZ^d$ is an aperiodic tile. 
Suppose, towards a contradiction, that $F'$ is not aperiodic. Clearly, $F'$ tiles $\ZZ^{d+k}$. Indeed, by assumption, there is a tiling $A\subset \ZZ^d$ of $\ZZ^d$ by $F$, and a tiling $T\subset\ZZ^k$  of $\ZZ^{k}$ by $S$; by construction of $F'$ we then have that 
$$A'=A\times  T
$$ is a tiling of $\ZZ^{d+k}$ by $F'$.
Therefore, our assumption that $F'$ is not aperiodic implies that there exists a periodic tiling $A'$ of $\ZZ^{d+k}$ by $F'$  with period lattice $G' \subseteq \ZZ^{d+k}$.
Define $V = \ZZ^d\times \Set{0}^k$ and
$$
G = G' \cap V.
$$
It follows from Remark \ref{lattice} that $G$ is a lattice in $V$. Define also the subset of $V$
\begin{equation}\label{Omegatiling}
    A \coloneqq (A' + X) \cap V. 
\end{equation}
Since for every $a'\in A'$, $x\in X$ with $a'+x\in V$ and every $g\in G$ we have $a'+x+g = (a'+g)+x = a''+x$ for some $a'' \in A'$, we conclude that $A+G=A$, so that $A$ is periodic in $V$. Thus, to arrive at a contradiction, it is enough to prove that $ F \times \{0\}^k\oplus A=V$ is a tiling.
Observe that for every $a'\in A'$
$$
(F'+a') \cap V = (F\times\{0\}^k+X+a') \cap V = F\times\{0\}^k + \bigl( (X+a') \cap V \bigr),
$$
since $F\times \{0\}^k \subseteq V$. Thus, since  $$(F'+a') \cap V, \quad a'\in A'$$ form a tiling of $V$, so do the translates of $F \times\{0\}^k$ by all the points $x+a' \in V$, with $x\in X, a'\in A'$, which is exactly the set of translates $A$ defined in \eqref{Omegatiling}. 
\end{proof}

\begin{definition}\label{def:discreteconnected}
    Let $B\subset \ZZ^d$. A \emph{connected component} of $B$ is a subset $C$ of $B$ such that $C+[0,1]^d$  is a connected component of $B+ [0,1]^d $ in $ \RR^d$.
    
    If $B$ has a single connected component, we say that $B$ is \emph{connected}.
\end{definition}

\begin{remark}\label{connectivity}
In our definition two points $a, b \in \ZZ^d$ are connected to each other if and only if $\Abs{a_i-b_i} \le 1$, for all $i=1,2,\ldots,d$. In other words each point in $\ZZ^d$ has $3^d-1$ neighbors.

We could strengthen the notion of connectivity for subsets of $\ZZ^d$ to demand a, so-called, $2d$-connected path from any point of the set to any other (such a path is allowed to go from any point $x \in \ZZ^d$ to any of its $2d$ neighbors along the $d$ coordinate axes). Everything in this paper would work essentially the same.
\end{remark}

\subsection{Folded bridge construction in $\ZZ^d$}
Let $F\subset \ZZ^d$ be finite with $m+1>1$ connected components $C_0, C_1,\dots,C_m$.
Pick $m+1$ points $a_j \in C_j$ with $a_0=0$ for simplicity. Then there exists a path $v_0, \ldots, v_{n-1} \in \ZZ^d$, where
each $v_j$ is a neighbor of $v_{j\pm 1}$,
and
$$
v_0 = a_0 = 0,\ \ v_{n-1} = a_m
$$
and each $a_j$, $j=0, 1, \ldots, m$, belongs to the path
$$
\gamma:\ \ v_0, v_1, \ldots, v_{n-1}.
$$
Thus the path $v_j$ connects all connected components of $F$.
See Fig. \ref{fig:path}.

\begin{figure}[ht]
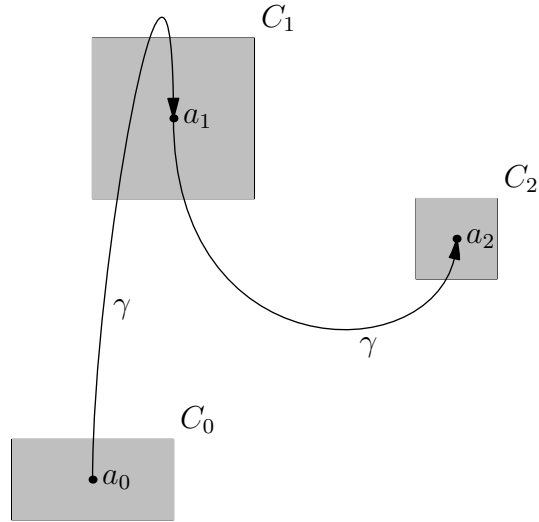

\centering
\begin{asy}
size(7cm);

int i, j, k, NN=10;

path[] P={(0, 0)--(2, 0)--(2, 1)--(0, 1)--cycle,
	(1, 4)--(3, 4)--(3, 6)--(1, 6)--cycle,
	(5, 3)--(6, 3)--(6, 4)--(5, 4)--cycle};
pair[] a={(1, 0.5), (2, 5), (5.5, 3.5)};
	
for(i=0; i<P.length; ++i) {
 draw(format("$C_{
 fill(P[i], mediumgray);
 dot(format("$a_{
}

/*
draw(Label("$\gamma$", Relative(0.3)), a[0]{up}..{down}a[1], Arrow);
draw(Label("$\gamma$", Relative(0.7)), a[1]{down}..{up}a[2], Arrow);
*/
draw(Label("$\gamma$", Relative(0.3)), a[0]..a[1], dotted, Arrow);
draw(Label("$\gamma$", Relative(0.7)), a[1]..a[2], dotted, Arrow);
\end{asy}
 \caption{The path $\gamma$, consisting of the points $v_0, \ldots, v_{n-1}$ visits all connected components of $F$.}
 \label{fig:path}
\end{figure} 



Define the sequence $S = \Set{s_j:\  j=0, 1, \ldots, 2n-1} \subseteq \ZZ^2$, as follows.
\begin{align}
\begin{split}\label{ss}
\, & s_0 = (0, 0), s_1 = (1, 0), \ldots, s_{n-1} = (n-1, 0),\\
 & s_n = (n-1, 1), s_{n+1} = (n-2, 1), \ldots, s_{2n-2}=(1, 1), s_{2n-1} = (0, 1).
\end{split}
\end{align}
as in Figure \ref{fig:snakes}.

\begin{figure}[ht]
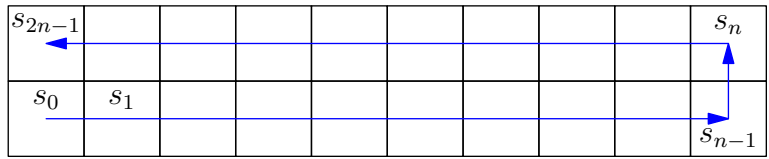

\centering
\begin{asy}
size(10cm);

int i, j, k, NN=10, m=2;

for(k=0; k<m-1; ++k) {
for(i=0; i<NN; ++i) draw(box((i, 2*k), (i+1, 2*k+1)));
for(i=0; i<NN; ++i) draw(box((i, 2*k+1), (i+1, 2*k+2)));
draw((1/2, 2*k+1/2)--(NN-1/2, 2*k+1/2), blue, Arrow);
draw((NN-1/2, 2*k+1/2)--(NN-1/2, 2*k+3/2), blue, Arrow);

draw((NN-1/2, 2*k+3/2)--(1/2, 2*k+3/2), blue, Arrow);
if(k<m-2) draw((1/2, 2*k+3/2)--(1/2, 2*k+5/2), blue, Arrow);
}
label("$s_0$", (0.5, 0.5), N);
label("$s_1$", (1.5, 0.5), N);
label("$s_{n-1}$", (NN-1/2, 0.5), S);
label("$s_n$", (NN-1/2, 1.5), N);
label("$s_{2n-1}$", (0.5, 1.5), N);
\end{asy}
\caption{The sequence $S=\Set{s_0,\dots,s_{2n-1}}\subseteq \ZZ^2$. There are two rows in this array, each of length $n$.}
 \label{fig:snakes}
\end{figure} 
From $F \subseteq \ZZ^d$ we construct the set $F' \subseteq \ZZ^{d+2}$ by 
$$
F' =  F \times \{0\}^2 + X
$$
where
\begin{align*}
X &= \Set{X_0, X_1, \ldots, X_{2n-1}}\\
 &= \{(0,s_0),(0,s_1),(0,s_2),\dots,(0,s_{n-1}),\\
 	&\ \ \ \ \ \ \ \ \ (v_0, s_n), (v_1, s_{n+1}), \ldots, (v_{n-1}, s_{2n-1})\}.
\end{align*}
Notice that this is a disjoint sum since the $s_j$ are all different
(so that $\Abs{F'} = \Abs{F} \cdot 2n$).

\begin{lemma}\label{lm:X}
The set $X$ is connected in $\ZZ^{d+2}$.
\end{lemma}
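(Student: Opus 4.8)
The plan is to exhibit $X$ as a ``connectivity path'' in $\ZZ^{d+2}$. Writing $X=\{X_0,X_1,\dots,X_{2n-1}\}$ in the order in which its elements are listed in the definition of $X$, I would check that each consecutive pair $X_j,X_{j+1}$, for $0\le j\le 2n-2$, consists of two neighbors in $\ZZ^{d+2}$, i.e.\ that their coordinates differ by at most $1$ in absolute value (in the sense of Remark \ref{connectivity}). Once this is established, connectivity of $X$ is immediate: for neighboring $X_j$ and $X_{j+1}$ the cubes $X_j+[0,1]^{d+2}$ and $X_{j+1}+[0,1]^{d+2}$ intersect (in coordinate $\ell$ the intervals $[\,(X_j)_\ell,(X_j)_\ell+1\,]$ and $[\,(X_{j+1})_\ell,(X_{j+1})_\ell+1\,]$ overlap whenever $\Abs{(X_j)_\ell-(X_{j+1})_\ell}\le 1$), so $X+[0,1]^{d+2}=\bigcup_{j}\bigl(X_j+[0,1]^{d+2}\bigr)$ is a finite union of connected sets in which consecutive ones meet, hence is connected in $\RR^{d+2}$; by Definition \ref{def:discreteconnected} this means $X$ is connected.

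The verification splits into three ranges of $j$. For $0\le j\le n-2$ we have $X_j=(0,s_j)=(0,(j,0))$ and $X_{j+1}=(0,(j+1,0))$, which agree in the first $d$ coordinates and in the last coordinate, and differ by $1$ in the $(d+1)$-st coordinate. For the ``hinge'' index $j=n-1$, I would use that $v_0=a_0=0$, so $X_{n-1}=(0,(n-1,0))$ and $X_n=(v_0,s_n)=(0,(n-1,1))$ differ only in the last coordinate, by $1$. Finally, for $n\le j\le 2n-2$, writing $j=n+i$ with $0\le i\le n-2$ and using \eqref{ss} in the form $s_{n+i}=(n-1-i,1)$, we get $X_{n+i}=(v_i,(n-1-i,1))$ and $X_{n+i+1}=(v_{i+1},(n-2-i,1))$; here the last coordinate is unchanged, the $(d+1)$-st coordinate decreases by exactly $1$, and the first $d$ coordinates change from $v_i$ to $v_{i+1}$, which differ by at most $1$ in each coordinate since $v_{i+1}$ is a neighbor of $v_i$ along the path $\gamma$. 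Thus every consecutive pair consists of neighbors, and $X$ is connected.

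I do not expect a genuine obstacle here; this is essentially a bookkeeping argument. The only points requiring a little care are: (i) that the second row of $S$ is traversed from right to left, so that the $(d+1)$-st coordinates of $s_n,s_{n+1},\dots,s_{2n-1}$ run through $n-1,n-2,\dots,0$ in lockstep with the path $v_0,v_1,\dots,v_{n-1}$ (this is precisely what makes the second segment a path); and (ii) the normalization $v_0=0$, which is what allows the ``turn'' from the first row to the second row (the hinge at $j=n-1$) to stay within the neighbor relation.
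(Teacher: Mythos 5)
Your proof is correct and follows essentially the same route as the paper's: both verify that consecutive elements $X_j, X_{j+1}$ are neighbors in $\ZZ^{d+2}$ in the three ranges $j\le n-2$, $j=n-1$ (using $v_0=a_0=0$), and $j\ge n$ (using that the $v_i$ form a path and the second row of $S$ is traversed in reverse). Your explicit indexing $s_{n+i}=(n-1-i,1)$ and the spelled-out reduction of neighbor-chains to Definition \ref{def:discreteconnected} are fine additions but not a different argument.
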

\begin{proof}
We first observe that for $j=0, 1, \ldots, n-2$ the point $X_j=(0, j, 0)$ is connected to $X_{j+1} = (0, j+1, 0)$ since they only differ at one coordinate and only by 1. We also have that $X_{n-1}=(0, n-1, 0)$ is connected to $X_n = (0, n-1, 1)$ (remember $v_0 = a_0 = 0$) since they only differ at the last coordinate by 1. Finally, if $j\ge n$ then $X_j=(v_{j-n}, j, 1)$ is connected to $X_{j+1}=(v_{j-n+1}, j+1, 1)$ since their first $d$ coordinates form two connected points in $\ZZ^d$ (since $v_{j-n}$ is connected to $v_{j-n+1}$) and they also differ by 1 at the $d+1$ coordinate.
\end{proof}

We imagine a copy of $\ZZ^d$ ``hanging'' from each of the $2n$ cells in Figure \ref{fig:snakes}, and, as we move from left to right and then left again, the copy of $F$ in that copy of $\ZZ^d$ is translated by the  vectors $\underbrace{0, \ldots, 0}_n, v_0, v_1, \ldots, v_{n-1}$.

%
%

We call this construction a ``folded bridge'' between the connected components $C_0, C_2, \ldots, C_m$ of $F$, giving $F'$. 
See Figure  \ref{fig:folding} for a visual illustration of the notion for the case $m=2$ (three connected components).

%
%
%

\begin{figure}[ht]
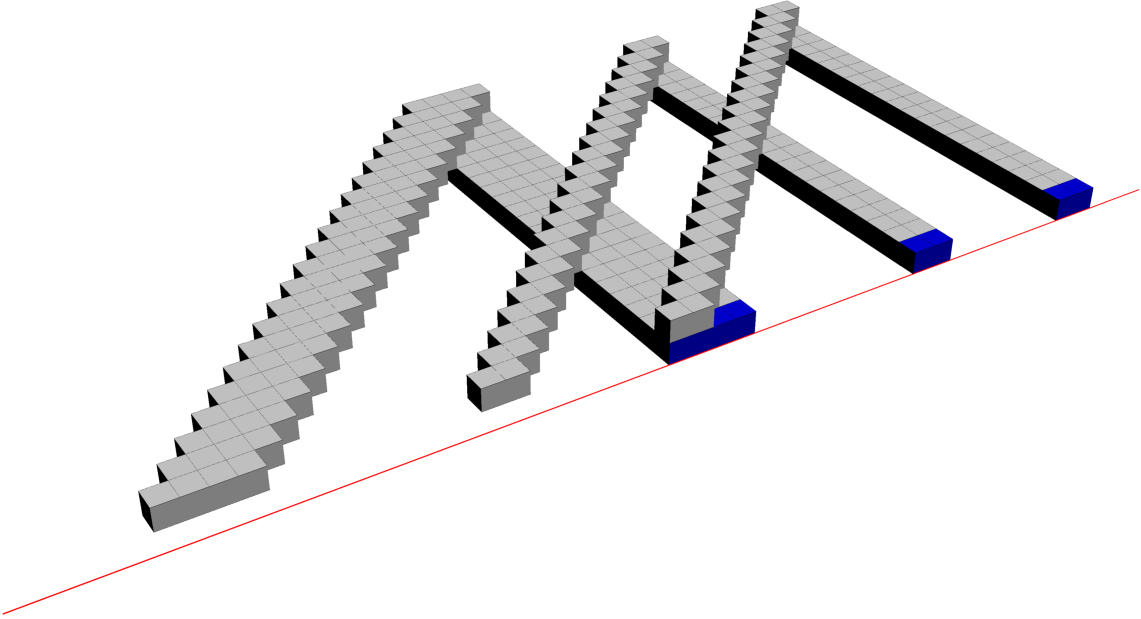

\centering
\begin{asy}
import three;
size(15cm);
currentprojection = perspective(-8, -10, 10);

void thick(triple p, triple u, triple v, real thickness, pen color) {
    path3 c=p--p+u--p+u+v--p+v--cycle;
    draw(c, black+opacity(1)+linewidth(0.02));
    draw(surface(c),color);
    triple n=scale3(thickness)*unit(cross(u, v));
    c=shift(n)*c;
    draw(c,black+opacity(1)+linewidth(0.02));
    draw(surface(c),color);

    path3 t=p--p+n--p+n+u--p+u--cycle;
    draw(surface(t), color);
    draw(surface(shift(v)*t), color);

    path3 t=p--p+n--p+n+v--p+v--cycle;
    draw(surface(t), color);
    draw(surface(shift(u)*t), color);
}

int[] T={0, 1, 2, 3, 12, 13, 20, 21}; // 1d tile
int[] A={0, 12, 20}; // one point from each connected component of T, in increasing order
int[] V={}; // path connecting each connected component to the next
int m=A.length; // number of connected components

for(int i=0; i<m-1; ++i) // compute V
 for(int j=A[i]; j<A[i+1]; ++j)
  V.push(j);
V.push(A[m-1]);
int n=V.length;

//triple p=(T[4]+1/2, 0, 1), q=p+(1, -1, 1);
//triple p=(T[4]+1/2, 0, 1), q=p+(1, -1, 1);
draw((-25, 0, 0)--(25, 0, 0), red);

/*
for(int i; i<m; ++i)
 draw("$F$", align=S, g=(T[A[i]], -0.1, 0)--(T[3]+1, -0.1, 0), black+linewidth(1));
draw("$F$", align=S, g=(T[4], -0.1, 0)--(T[5]+1, -0.1, 0), black+linewidth(1));
*/

triple u=(1, 0, 0), v=(0, 1, 0);//, start=(0, 0, 0);

for(int j=0; j<n; ++j) {
 pen P=palegray+opacity(1);
 if(j==0) P=blue+opacity(1);
 for(int i=0; i<T.length; ++i) {
  thick((T[i], j, 0), u, v, 1, P);
 }
}

for(int j=0; j<n; ++j) {
 pen P=palegray+opacity(1);
 for(int i=0; i<T.length; ++i) {
  thick((T[i]-V[j], n-1-V[j], 1), u, v, 1, P);
 }
}
\end{asy}

\caption{How $F'$ is constructed from $F$. A folded bridge on the set $F$ (blue is $F\times[0,1]^2$) connecting its three connected components. The red line is the ambient space for $F$, namely $\ZZ^d$.}
\label{fig:folding}
\end{figure}

\begin{lemma}\label{lm:F}
$F'$ is connected in $\ZZ^{d+2}$.
\end{lemma}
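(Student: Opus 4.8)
The plan is to show that $F'$ is connected by combining two facts: that $X$ is connected (Lemma \ref{lm:X}), and that each translated copy of $F$ that we glue on is itself \emph{attached} to $X$, so that the whole of $F'$ is a union of connected pieces all of which meet the connected set $X$. Recall that $F' = (F\times\{0\}^2) + X = \bigcup_{j=0}^{2n-1}\bigl((F\times\{0\}^2) + X_j\bigr)$. Each summand $(F\times\{0\}^2)+X_j$ is a translate of $F\times\{0\}^2$, hence a union of $m+1$ connected components, one for each $C_i$. The key observation is that $X_j$ itself lies in $(F\times\{0\}^2)+X_j$: indeed, each $X_j$ is of the form $(w,s_j)$ where $w\in\{0\}\cup\{v_0,\dots,v_{n-1}\}$, and since $0=a_0\in C_0\subset F$ and each $v_\ell$ lies on the path $\gamma$ which passes through every $C_i$, in all cases $w$ is a point of $F$; therefore $X_j = (w,0^2)+(0,s_j) \in (F\times\{0\}^2)+X_j$ as long as we can also reach $0\in F$ — wait, more carefully: $X_j=(w,s_j) = (0,0^2) + \ldots$ hmm. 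Let me restate: the point $(w,s_j)\in\ZZ^{d+2}$ equals $(w,0^2) + (0^d,s_j)$, but $X_j=(w,s_j)$ is not of the form $(\text{point of }F)\times\{0\}^2 + X_j$ unless $0\in F$. So the cleaner statement is: since $0 = a_0 \in C_0 \subseteq F$, we have $0\times\{0\}^2 \in F\times\{0\}^2$, hence $X_j = 0\times\{0\}^2 + X_j \in (F\times\{0\}^2)+X_j$ for every $j$. Thus $X\subseteq F'$.

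Next I would argue that each connected component of $(F\times\{0\}^2)+X_j$ touches $X$. Fix $j$ and a component $C_i$ of $F$; the corresponding piece is $(C_i\times\{0\}^2)+X_j$, which is a translate of the connected set $C_i\times\{0\}^2$, hence connected in $\ZZ^{d+2}$ (translation preserves the neighbor relation of Remark \ref{connectivity}). I must exhibit a point of this piece that coincides with, or is a neighbour of, a point of $X$. Here is where the choice of the path $\gamma$ and of the base points $a_i$ enters: the point $a_i\in C_i$, and $a_i$ lies on the path $v_0,\dots,v_{n-1}$, say $a_i = v_\ell$. Then in the summand corresponding to the index $n+\ell$ (where $X_{n+\ell}=(v_\ell, s_{n+\ell})$), the piece $(C_i\times\{0\}^2)+X_{n+\ell}$ contains the point $a_i\times\{0\}^2 + (v_\ell, s_{n+\ell}) = (a_i + v_\ell, s_{n+\ell}) = (2v_\ell, s_{n+\ell})$ — hmm, that is not obviously in $X$. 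Let me reconsider the geometry: in the ``hanging copies'' picture, the copy of $F$ hanging from cell $s_{n+\ell}$ is translated by $v_\ell$, and the cell $s_{n+\ell}$ in the $\ZZ^2$ factor sits directly ``below'' (adjacent in the $(d+1)$st coordinate only when $\ell=0$, otherwise not). The correct mechanism is: $X_{n+\ell}=(v_\ell,s_{n+\ell})\in F'$ by the previous paragraph, and $X_{n+\ell}$ is a neighbour of the point $(v_\ell,s_{n+\ell}) + (a_i - v_\ell,0^2)$?? This needs care; the honest route is simply: $X\subseteq F'$ and $X$ is connected, so it suffices to show every point of $F'$ is connected \emph{within $F'$} to some point of $X$. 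A point of $F'$ has the form $p = f\times\{0\}^2 + X_j = (f,0^2)+(w_j,s_j) = (f+w_j, s_j)$ for some $f\in F$, where $w_j$ is $0$ or some $v_{j-n}$. Within the slab with last coordinate fixed and $(d+1)$st coordinate $= (s_j)_1$, the set $(F\times\{0\}^2)+X_j$ restricted there...

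The clean argument, and the one I expect to be the intended one: for each summand index $j$, the set $(F\times\{0\}^2)+X_j$ is connected to $X_j$ \emph{inside that summand} precisely when $F$ is connected — but $F$ is not connected, that is the whole point. So instead: the summand $(F\times\{0\}^2)+X_j = \bigcup_i (C_i\times\{0\}^2 + X_j)$, and I claim each $C_i\times\{0\}^2 + X_j$ contains a point of $X$ or a point neighbouring $X$, because $X_j=(w_j,s_j)$ with $w_j\in\gamma$, and separately for the ``correct'' $j$ (namely, pick $j$ so that $w_j = a_i$, possible since $a_i$ is on $\gamma$ and $\gamma$ is traversed by $v_0,\dots,v_{n-1} = X_n,\dots,X_{2n-1}$'s first coordinates) the point $a_i\times\{0\}^2+X_j$ equals $(a_i+a_i, s_j)$... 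I realise the bridge is designed so that the hanging copy of $F$ translated by $v_\ell$ has its point $a_i=v_\ell$ sitting exactly at the ``anchor'' — let me just say: by construction the translated copy $F - v_\ell$ hanging from cell $s_{n+\ell}$ contains the origin $0$ at the position of $a_i - v_\ell + \dots$. Rather than get this index bookkeeping wrong, I will structure the proof as: \emph{Step 1:} $X\subseteq F'$ (shown above via $0\in F$). \emph{Step 2:} $F'$ is the union of $X$ together with translated copies $C_i\times\{0\}^2 + X_j$ of the connected sets $C_i\times\{0\}^2$; each such copy is connected. \emph{Step 3:} for each $(i,j)$ the copy $C_i\times\{0\}^2+X_j$ meets $X$ (or a neighbour of $X$, hence is connected-within-$F'$ to $X$), because $X_j + (a_i, 0^2) \in C_i\times\{0\}^2 + X_j$ and this point is a neighbour in $\ZZ^{d+2}$ of some $X_{j'}$ — the verification being the same neighbour computation as in Lemma \ref{lm:X}, using that the $a_i$ and the $v_\ell$ all lie on the path $\gamma$. \emph{Conclusion:} $F'$ is a union of connected sets each meeting the connected set $X$, hence connected.

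The main obstacle, and the only thing requiring genuine care, is Step 3: pinning down the exact neighbour relation between the relevant point of the copy $C_i\times\{0\}^2+X_j$ and a point of $X$, and in particular checking that the right copy $C_i$ in the right ``hanging'' cell $j$ has its anchor $a_i$ sitting adjacent to the bridge. This is a finite-coordinate adjacency check entirely analogous to the one in the proof of Lemma \ref{lm:X}, using that consecutive $v_\ell$ differ coordinatewise by at most $1$ and that each $a_i\in C_i$ is a vertex of the path $\gamma$; I would carry it out by writing out the two points coordinate by coordinate in the $d+2$ coordinates and observing they differ in at most one coordinate and by at most $1$ there. Everything else — translation-invariance of connectivity, the union-of-connected-sets-sharing-a-common-connected-piece principle — is routine.
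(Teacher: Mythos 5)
Your overall strategy---treat $X\subseteq F'$ as a connected hub and attach everything else to it---is close in spirit to the paper's proof, but the decomposition you actually use is too fine, and the one step carrying real content (your Step~3) is both left unverified and false as stated. You claim that for \emph{every} pair $(i,j)$ the piece $C_i\times\{0\}^2+X_j$ contains a point that is a neighbour of some $X_{j'}$. Take a component $C_i$, $i\ge 1$, whose anchor $a_i=v_k$ lies far (in $\ell^\infty$) from $0=v_0$ and from $v_{n-2},v_{n-1}$, and take $j=0$. Every point of $C_i\times\{0\}^2+X_0$ has the form $(c,0,0)$ with $c\in C_i$. For such a point to neighbour $X_{j'}=(w_{j'},s_{j'})$ one needs $s_{j'}$ within $\ell^\infty$-distance $1$ of $(0,0)$, which forces $j'\in\{0,1,2n-2,2n-1\}$ and hence $w_{j'}\in\{0,v_{n-2},v_{n-1}\}$; one then also needs $c$ within $\ell^\infty$-distance $1$ of $w_{j'}$, which fails when $C_i$ is far from those three points. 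So that piece neither meets nor neighbours $X$, and the ``union of connected sets each meeting a common connected set'' principle does not apply to your decomposition. (Your Step~1, that $X\subseteq F'$ because $0=a_0\in C_0\subseteq F$, is correct.)

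The repair is exactly the paper's argument: group by $i$ \emph{before} invoking any adjacency. The set $C_i\times\{0\}^2+X$ is connected because it is a Minkowski sum of two connected sets ($C_i\times\{0\}^2$ and $X$, the latter by Lemma \ref{lm:X}); this is what links the pieces with the same $i$ but different $j$ to one another, a linkage your proposal never establishes. Then, for $i\ge 1$, writing $a_i=v_k$, the two points $(a_i,s_{n-k-1})\in C_i\times\{0\}^2+X$ (coming from $a_i\in C_i$ and $X_{n-k-1}=(0,s_{n-k-1})$) and $(a_i,s_{n+k})=(a_0+v_k,s_{n+k})\in C_0\times\{0\}^2+X$ (coming from $a_0=0$ and $X_{n+k}=(v_k,s_{n+k})$) agree in their first $d+1$ coordinates and differ by $1$ in the last, since $s_{n+k}=(n-1-k,1)$ sits directly above $s_{n-k-1}=(n-k-1,0)$. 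This single adjacency---valid only at that particular pair of indices, not for every $j$---is the coordinate check you postponed; it attaches each $C_i$-block to the $C_0$-block, which contains $X$, and the lemma follows.
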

\begin{proof}
We first observe that
$$
F' = F\times \{0\}^2 +X = \bigcup_{j=0}^m (C_j\times \{0\}^2 + X),
$$
and each $C_j\times \{0\}^2 +X$ is connected from Lemma \ref{lm:X} and the fact that the sum of two connected sets is connected. It remains to show that the connected sets $C_j\times \{0\}^2 +X$ connect to each other as well. We show that for $j\ge 1$ the set $C_j\times \{0\}^2 +X$ connects to $C_0\times \{0\}^2 +X$. Indeed, there exists $k \in \Set{0, 1, \ldots, n-1}$ such that $a_j = v_k$ (by the construction of the path $v_j$, $j=0,\dots,n-1$). Then (recall that $a_0=0$)
$$
(a_j, s_{n+k}) = (a_0+v_k, s_{n+k})  \in C_0\times \{0\}^2 +X
$$
and
$$
(a_j, s_{n-k-1}) \in C_j\times \{0\}^2 +X.
$$
These two points have the same first $d+1$ coordinates and differ only in the last coordinate where the first point has 1 and the second has 0. (The point $s_{n+k}$ is right above $s_{n-k-1}$ in Fig.\ \ref{fig:snakes}.)
\end{proof}

By Theorem \ref{thm:discrete} we have that  $F'$ is aperiodic in $\ZZ^{d+2}$ if $F$ is aperiodic in $\ZZ^d$.

Using this, we can finally prove Theorem \ref{main:aperiodic}:

\begin{proof}[Proof of Theorem \ref{main:aperiodic}.]
By \cite[Corollary 1.5]{GT22}, if $d$ is sufficiently large, we can choose a finite $F \subset \ZZ^d$ which is an aperiodic translational tile. By applying the ``folded bridge'' construction above  we obtain a set $F' \subset \ZZ^{d+2}$ which is connected,  and is also an aperiodic translational tile (by Theorem \ref{thm:discrete}, since $S$ is a rectangle). Let $R_{d+2}$ be the ``dented $(d+2)$-dimensional cube''  constructed in the proof of  \cite[Lemma 2.2]{GT22}.  Observe that by construction of $R_{d+2}$, the set $F'+ R_{d+2}\subset \RR^{d+2}$ is connected if and only if $F'+[0,1]^{d+2}\subset \RR^{d+2}$ is connected; thus, since $F'$ is connected in $\ZZ^{d+2}$ in the sense of Definition \ref{def:discreteconnected}, $F'+ R_{d+2}$ is  connected in $\RR^{d+2}$. Moreover, the argument in the proof of \cite[Theorem 2.1]{GT22} gives that $F'+R_{d+2}$ is aperiodic in $\RR^{d+2}$, since $F'$ is aperiodic in $\ZZ^{d+2}$.
Finally, note that $F'+ R_{d+2}\subset \RR^{d+2}$ is equal to the closure of its interior.  
Theorem \ref{main:aperiodic} now follows, with $\Omega$ being $F' +R_{d+2}$.
\end{proof}

\section{Connected spectral sets that do not tile}\label{sec:spectralnottile}

The ultimate goal of this section is to prove Theorem \ref{main:spectralnottile}. We begin with the following general theorem, which shows that certain operations allow to construct, from a given spectral set $\Omega$, other sets that are spectral as well and  that  preserve the tiling property of the original set $\Omega$.

\begin{theorem}[Spectrality and tiling preserving operations]\label{thm:general}
    Let $\Omega$ be a bounded, measurable set in $\RR^d$. Define the finite set
$$
X = \Set{(v_j,s_j): j=0, 1, \ldots, n-1} \subseteq \RR^{d+k}
$$
where $v_0,\dots,v_{n-1}\in\RR^d$ and $s_0,\dots,s_{n-1}$ are $n$ distinct points in  $\ZZ^k$. Let $$S=\{s_j\colon j=0, 1, \ldots, n-1\}$$ and $\Omega' = \left( \Omega\times [0,1]^k \right) \oplus X$.  Then:
\begin{enumerate-math}
    \item Suppose that $S$ tiles $\ZZ^d$ by translations. Then $\Omega'$ tiles $\RR^{d+k}$ by translations if and only if $\Omega$ tiles $\RR^d$ by translations.
    \item If $\Omega\subset \RR^d$ and $S+[0,1]^k\subset\RR^k$ are spectral, then $\Omega'$ is spectral in $\RR^{d+k}$.
\end{enumerate-math}
\end{theorem}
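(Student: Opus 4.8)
The plan is to treat the two parts separately: part (i) by an explicit construction of a tiling complement, together with a ``slicing'' argument for its converse; and part (ii) via the Fourier-analytic orthogonality criterion \eqref{eq:orthogonal} and Proposition~\ref{spectrality}. Throughout I use that, since $s_0,\dots,s_{n-1}$ are distinct points of $\ZZ^k$, we have the essentially disjoint decomposition $\Omega'=\bigcup_{j=0}^{n-1}(\Omega+v_j)\times(s_j+[0,1]^k)$, whence $|\Omega'|=n|\Omega|$.

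For the implication ``$\Omega$ tiles $\RR^d$ $\Longrightarrow$ $\Omega'$ tiles $\RR^{d+k}$'' in part (i), I would fix a tiling $\Omega\oplus A=\RR^d$ and, using the hypothesis that $S$ tiles $\ZZ^k$, a tiling $S\oplus T=\ZZ^k$ with $T\subseteq\ZZ^k$ (equivalently $(S+[0,1]^k)\oplus T=\RR^k$ up to measure zero), and then check that $\Omega'\oplus(A\times T)=\RR^{d+k}$. Covering is immediate: a point $(x,y)$ picks out an a.e.-unique pair $(j,t)$ with $y\in s_j+t+[0,1]^k$ and then, because $\Omega\oplus(A+v_j)$ is again a tiling, an a.e.-unique $a\in A$ with $x\in\Omega+v_j+a$. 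For essential disjointness, if $(x,y)$ lies in the pieces labelled by $(a,t,j)$ and $(a',t',j')$, then for a.e.\ $y$ the fact that $S\oplus T$ is a partition forces $s_j+t=s_{j'}+t'$, hence $j=j'$ and $t=t'$ (the $s_j$ being distinct), and then $a=a'$ for a.e.\ $x$ since $\Omega\oplus(A+v_j)$ tiles. For the converse ``$\Omega'$ tiles $\RR^{d+k}$ $\Longrightarrow$ $\Omega$ tiles $\RR^d$'' I would slice: given $\Omega'\oplus A'=\RR^{d+k}$, write each $a'=(p_{a'},q_{a'})$ and expand $\one_{\Omega'+a'}$ accordingly; by Fubini, for a.e.\ $y\in\RR^k$ the restriction to the hyperplane $\RR^d\times\{y\}$ reads $\sum_{a',j}\one_{\Omega+v_j+p_{a'}}(x)\,\one_{s_j+q_{a'}+[0,1]^k}(y)=1$ for a.e.\ $x$. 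For a.e.\ $y$ this $y$ avoids every cube boundary, so for each fixed $a'$ at most one index $j$ contributes (the $n$ open cubes $s_j+q_{a'}+(0,1)^k$, $0\le j\le n-1$, being pairwise disjoint, as they are integer translates of one another), and one obtains a tiling of $\RR^d$ by translates of $\Omega$; two of these translates cannot coincide since $|\Omega|>0$. Hence $\Omega$ tiles $\RR^d$.

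For part (ii), let $\Lambda\subseteq\RR^d$ be a spectrum of $\Omega$ and $M\subseteq\RR^k$ a spectrum of $S+[0,1]^k$; I claim $\Lambda':=\Lambda\times M$ is a spectrum of $\Omega'$, which I would verify through Proposition~\ref{spectrality}(ii). Expanding $\one_{\Omega'}$ and taking Fourier transforms yields the factorization
\[
\ft{\one_{\Omega'}}(\xi,\eta)=\ft{\one_{\Omega}}(\xi)\,\ft{\one_{[0,1]^k}}(\eta)\sum_{j=0}^{n-1}e^{-2\pi i(v_j\cdot\xi+s_j\cdot\eta)},
\]
and in particular, at $\xi=0$, $\ft{\one_{\Omega'}}(0,\eta)=\ft{\one_{\Omega}}(0)\,\ft{\one_{S+[0,1]^k}}(\eta)=|\Omega|\,\ft{\one_{S+[0,1]^k}}(\eta)$. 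Now take a nonzero $(\xi,\eta)=(\lambda-\lambda',\mu-\mu')\in\Lambda'-\Lambda'$. If $\lambda\ne\lambda'$, then $\ft{\one_\Omega}(\xi)=0$ because $\Lambda$ satisfies \eqref{eq:orthogonal} (Proposition~\ref{spectrality}), so the factorization gives $\ft{\one_{\Omega'}}(\xi,\eta)=0$. If $\lambda=\lambda'$, then $\xi=0$ and $\mu\ne\mu'$, so $\ft{\one_{S+[0,1]^k}}(\eta)=0$ because $M$ satisfies \eqref{eq:orthogonal}, and the displayed identity at $\xi=0$ gives $\ft{\one_{\Omega'}}(0,\eta)=0$. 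Thus $\Lambda'$ satisfies \eqref{eq:orthogonal} for $\Omega'$. Finally, by the ``moreover'' clause of Proposition~\ref{spectrality}, $\dens{\Lambda}=|\Omega|$ and $\dens{M}=|S+[0,1]^k|=n$, and a direct box-counting computation gives $\dens(\Lambda\times M)=\dens{\Lambda}\cdot\dens{M}=n|\Omega|=|\Omega'|$; so $\Lambda'$ has upper density $|\Omega'|$, and Proposition~\ref{spectrality}(ii) concludes that $\Omega'$ is spectral.

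The main obstacle is that $\Omega'$ is genuinely not a product set — each slab $\Omega\times(s_j+[0,1]^k)$ is sheared by its own vector $(v_j,0)$ — so no ``product of tiles / product of spectral sets'' argument applies verbatim. In part (i) the delicate point is that a tiling complement $A'$ of $\Omega'$ may involve arbitrary fractional translations $q_{a'}$ in the last $k$ coordinates, so one only gets a clean slice for a \emph{generic} $y$; choosing half-open cubes and invoking Fubini is what makes this precise. In part (ii) the crux is the identity $\ft{\one_{\Omega'}}(0,\eta)=|\Omega|\,\ft{\one_{S+[0,1]^k}}(\eta)$: the twisting vectors $v_j$ enter only through the $\xi$-variable and are therefore invisible on the hyperplane $\xi=0$, which is precisely the slice of frequency space carrying the ``$\eta$-only'' half of the orthogonality relation — and this is exactly what allows the naive product spectrum $\Lambda\times M$ to survive the shearing.
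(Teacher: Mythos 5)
Your proposal is correct and follows essentially the same route as the paper: part (i) via the product complement $A\times T$ and, for the converse, slicing the tiling along $\RR^d\times\{y\}$ (the paper phrases this as the intersection-with-$V$ argument borrowed from its Theorem~\ref{thm:discrete}, which your Fubini/generic-$y$ version makes precise in the continuous setting); part (ii) via the factorization of $\ft{\one_{\Omega'}}$ and the product spectrum $\Lambda\times\Sigma$, with the density count $\dens{\Lambda'}=n|\Omega|=|\Omega'|$ and Proposition~\ref{spectrality}. No gaps.
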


 \begin{proof}[Proof of Theorem \ref{thm:general} (i).]
 If $A\oplus \Omega=\RR^d$  then $A'\oplus\Omega'=\RR^{d+k}$, where 
 $$A'=A\times T
 $$
 and $T\subset\ZZ^d$ is a tiling of $\ZZ^k$ by $S$. 
Conversely, if $\Omega'\oplus A'=\RR^{d+k}$ then, by a similar argument as in the proof of Theorem \ref{thm:discrete}, the set \eqref{Omegatiling} is a tiling of $\RR^d\times\{0\}^k$ by $\Omega\times \{0\}^k$.
\end{proof}

\begin{proof}[Proof of Theorem \ref{thm:general} (ii).]
Let $\Lambda\subset \RR^d$ be a spectrum for $\Omega$ and $\Sigma\subset \RR^k$  be a spectrum for $S+[0,1]^k$, then the Cartesian product set
$$
\Lambda':=\Lambda\times \Sigma=\{(\lambda,\sigma)\in \RR^{d+k}\colon \lambda\in \Lambda,\; \sigma\in\Sigma \}
$$
 defines an orthogonal system 
 $$E(\Lambda')=\{e^{2\pi i \lambda'\cdot x}\}_{\lambda'\in\Lambda'}
 $$
 in $L^2(\Omega')$. Indeed, let $\tau=(\lambda,\sigma),\tau'=(\lambda',\sigma')$ be distinct points in $\Lambda'$. By \eqref{eq:orthogonal}, we need to show that 
 \begin{equation}\label{eq:ftZeros}
     \ft{\one_{\Omega'}}(\tau'-\tau)=0.
 \end{equation}
 Observe that by the definition of $\Omega'$ we have 
$$\one_{\Omega'}(w_1,\dots,w_{d+k})=\one_\Omega(w_1,\dots,w_d)\one_{[0,1]^k}(w_{d+1},\dots,w_{d+k})*\left(\sum_{x\in X}\delta_x\right)(w_1,\dots,w_{d+k}).
$$
Therefore
 \begin{equation}\label{eq:ftOmega'}
     \ft{\one_{\Omega'}}(\xi_1,\dots,\xi_{d+k})=\ft{\one_\Omega}(\xi_1,\dots,\xi_d)\ft{\one_{[0,1]^k}}(\xi_{d+1},\dots,\xi_{d+k})\left( \sum_{x\in X}e^{2\pi i x\cdot(\xi_1,\dots,\xi_{d+k})}\right).
 \end{equation}
 If $\lambda', \lambda$ are distinct in $\Lambda$, then by \eqref{eq:orthogonal} $$\ft{\one_\Omega}(\lambda'-\lambda)=0,$$ 
 since $\Lambda$ is a spectrum for $\Omega$, and so, in particular $E(\Lambda)$ is orthogonal in $L^2(\Omega)$. Thus, in this case by \eqref{eq:ftOmega'} we see that \eqref{eq:ftZeros} is satisfied.  Otherwise, $\lambda'-\lambda =0$ and  $\sigma,\sigma'$ are distinct in the spectrum $\Sigma$ of $S+[0,1]^k$, so by \eqref{eq:orthogonal}
 $$\ft{\one_{S+[0,1]^k}}(\sigma'-\sigma)=0.
 $$
 By \eqref{eq:ftOmega'} we then have:
 \begin{align*}
     \ft{\one_{\Omega'}}(\tau'-\tau) &=\ft{\one_\Omega}(0)\ft{\one_{[0,1]^k}}(\sigma'-\sigma)\left(\sum_{x\in X}e^{2\pi i x\cdot(0,\sigma'-\sigma)}\right)\\
     &=|\Omega|\ft{\one_{[0,1]^k}}(\sigma'-\sigma)\left(\sum_{s\in S}e^{2\pi i s\cdot(\sigma'-\sigma)}\right)\\
     &= |\Omega|\ft{\one_{S+[0,1]^k}}(\sigma'-\sigma)=0.
 \end{align*}
Therefore \eqref{eq:ftZeros} is satisfied in this case as well, and hence $E(\Lambda')$ is orthogonal in $L^2(\Omega')$, as claimed.
 Now,  observe that $$\dens{\Lambda'}=\dens{\Lambda\times\Sigma}=\dens{\Lambda}\cdot \dens{\Sigma}.$$
 Thus, as $\Lambda$ is a spectrum for $\Omega$ and $\Sigma$ is a spectrum for $S+[0,1]^k$, by Proposition \ref{spectrality}, we have $$\dens{\Lambda'}=|\Omega||S+[0,1]^k|= |\Omega|n= |\Omega'|.$$
A further application of Proposition \ref{spectrality} then gives  that $\Omega'$ is spectral.
\end{proof}

\subsection{Folded bridge construction in $\RR^d$}
Let $\Omega$ be a bounded, open  set in $\RR^d$ with $m+1 >1$ connected components $C_0, C_1, \dots, C_m$.
Pick $m+1$ points $a_j$ in the interior of $C_j$ each and assume for simplicity $a_0=0$.
Let $K$ be large enough so that  we have
\beql{overlap}
C_i \cap (C_i+\delta_j) \neq \emptyset \; \text{ for all } i, j,
\eeq
where $\delta_i = \frac1{K}(a_{i+1}-a_i)$, $i=0,\ldots,m-1$. (In particular, since $\Omega$ is open, each of the sets $C_i$, $i=1,\dots,m-1$ is open, and thus each of these intersections has non-empty interior.)
Let $n=mK+1$ and
define the sequence $v_j$, $j=0, 1, 2, \ldots, n-1$, to consist of the $n$ values
\begin{align*}
\ & a_0, a_0+\delta_0, a_0+2\delta_0, \ldots, a_0+(K-1)\delta_0,\\ 
& a_1, a_1+\delta_1, a_1+2\delta_1, \ldots, a_1+(K-1)\delta_1,\\ 
& a_2, a_2+\delta_2, a_2+2\delta_2, \ldots, a_2+(K-1)\delta_2,\\
& \ \ \ \ \ \ \ \ \ \ \ \ \cdots\\
& a_{m-1}, a_{m-1}+\delta_{m-1}, a_{m-1}+2\delta_{m-1}, \ldots, a_{m-1}+(K-1)\delta_{m-1},\\
& a_m
\end{align*}
or: $$v_j=a_{\tilde j}+(j-K\tilde j)\delta_{\tilde j},$$ where $\tilde j=\Floor{\frac{j}{K}}$,
so that, in particular, all points $a_0, a_1, \ldots, a_m$ belong to the sequence $v_j$, $j=1,\dots,n-1$.
We then define 
$$
\Omega_1 =  \Inn\left(\Omega\times [0,1]^2  + X\right)
$$
where
\begin{align*}
	X &= \Set{X_0, X_1, \ldots, X_{2n-1}}\\
	&= \{(0,s_0),(0,s_1),(0,s_2),\dots,(0,s_{n-1}),\\
	&\ \ \ \ \ \ \ \ \ (v_0, s_n), (v_1, s_{n+1}), \ldots, (v_{n-1}, s_{2n-1})\},
\end{align*}
where $s_j \in \RR^2$ is the sequence defined in \eqref{ss} and shown in Figure \ref{fig:snakes}.
Notice that this is a disjoint sum up to measure zero  since the $s_j$ are all different. Clearly, the set $S=\{s_j\colon j=0,\dots, 2n-1\}$ tiles $\ZZ^2$ by translations and $S+[0,1]^2$ is spectral in $\RR^2$.

Let us now see why the set $\Omega_1$ is connected.
The first observation is that for every $\omega_1,\omega_2\in\Omega$ and $x_1,x_2\in [0,1]^2$ such that $(\omega_1, x_1), (\omega_2, x_2)\in \Inn\left(\Omega\times[0, 1]^2 \right)$
\beql{connected}
\omega_1, \omega_2 \text{ connected in } \Omega \implies
(\omega_1, x_1), (\omega_2, x_2) \text{ connected\footnotemark \ in } \Inn\left(\Omega\times[0, 1]^2 \right).
\eeq \footnotetext{We say that two points are connected in a set if they both belong to the same connected component of the set.}
When moving from one cell of Figure \ref{fig:snakes} to the next, the set
$$\Inn\left(\left(C_i\times[0, 1]^2 + (v_j, s_j)\right)\cup\left( C_i\times[0, 1]^2 + (v_{j+1}, s_{j+1})\right)\right)$$
 is connected, by \eqref{overlap} and the fact that $s_j$ and $s_{j+1}$ differ in one coordinate only and exactly by 1, so $s_j-s_{j+1} \in [-1, 1]^2$, the latter set being the difference set of $[0, 1]^2$.

Hence, when we move across one cell in Figure \ref{fig:snakes}, following the path, the connected components are either maintained or merging, so new connected components are not created along the way. Merging happens when we are moving on the upper row (see an illustration in Figure \ref{fig:folding}). Take $j\ge 1$ and let $v_k$ be such that $a_j = v_k$. Then
$$
(a_j, s_{n+k}+(\tfrac{1}{2},\tfrac{1}{2})) = (a_0+v_k, s_{n+k}+(\tfrac{1}{2},\tfrac{1}{2})) \in \Inn\left( C_0\times [0,1]^2+X \right)$$
and
$$
(a_j, s_{n-k-1}+(\tfrac{1}{2},\tfrac{1}{2})) \in \Inn\left( C_j\times [0,1]^2 +X\right).
$$
At that point the set $\Inn\left( \left(C_j\times [0,1]^2+X \right)\cup\left(C_0\times [0,1]^2+X\right)\right) $ gets connected, so, in the end we are left with one connected set.


By Theorem \ref{thm:general} since the boundary of $\Omega$ has measure zero) we have:
\begin{enumerate-math}
    \item $\Omega_1$ tiles $\RR^{d+2}$ by translations if and only if $\Omega$ tiles $\RR^d$ by translations.
    \item $\Omega_1$ is spectral in $\RR^{d+2}$ if $\Omega$ is spectral in $\RR^d$.
\end{enumerate-math}
Using this, we can now prove Theorem  \ref{main:spectralnottile}:

\begin{proof}[Proof of Theorem \ref{main:spectralnottile}.]
By \cite[Theorem 1.2]{T04} and \cite[Section 3]{KM06}, if $d\geq 3$,
we can choose a finite union of  disjoint open unit cubes $\Omega\subset \RR^d$ which is spectral but does not tile by translations.
Hence, by applying the construction above we obtain an open set $\Omega_1\subset \RR^{d+2}$ which is connected,  spectral and does not tile $\RR^{d+2}$ by translations.  This proves Theorem \ref{main:spectralnottile}.
\end{proof}

\section{Connected translational tiles that are not spectral}\label{sec:tilenotspectral}
In this section we establish Theorem \ref{main:tilenotspectral}. Observe that the construction described in the previous section might not give rise to a non-spectral set, due to the lack of a converse statement to Theorem \ref{thm:general}(ii). It is an open problem whether such a converse statement is, in fact, true.

Let $\Omega$ be a bounded  open set in $\RR^d$ and let $v\in \RR^d$ be a vector. Let $u=(v,1)\in\RR^{d+1}$, $n\geq 1$. We say that the set 
\begin{equation}
    \Omega'\coloneqq \Omega\times [0,1]+\{0,u,2u,\dots, (n-1)u\} 
\end{equation}
is a \emph{stacking} of $\Omega$. See Figure \ref{fig:stacking} for a visual illustration of the notion.

\begin{figure}[ht]
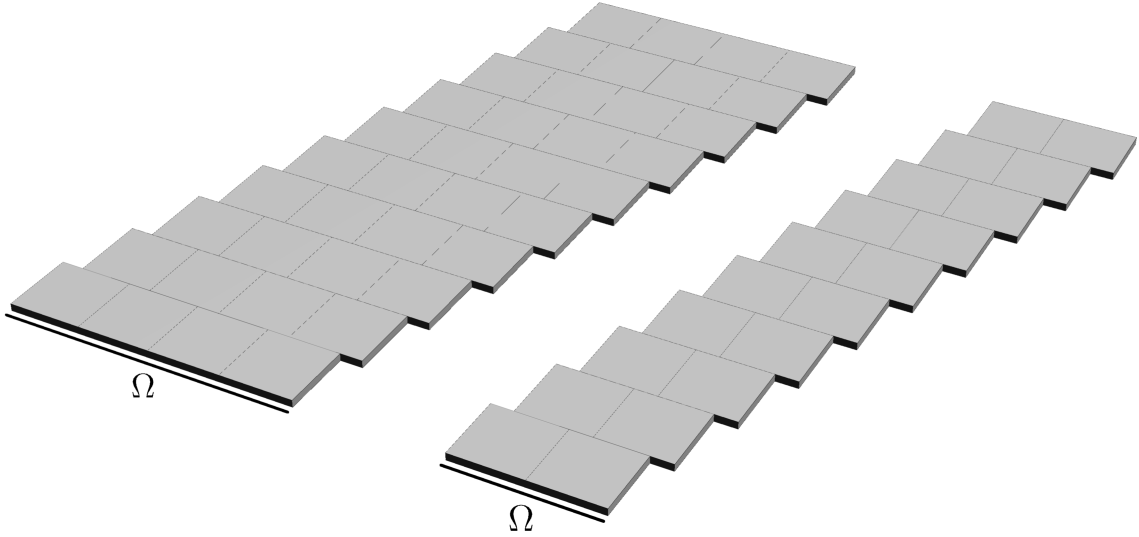

\centering
\begin{asy}
import three;
size(15cm);
currentprojection = perspective(10, -10, 10);

// Draw a square in 3d
void sq(triple p, pen color) {
    triple u=(1, 0, 0), v=(0, 1, 0);
    real h=0.1;
    path3 c=p--p+u--p+u+v--p+v--cycle;
    draw(c,black+opacity(1)+linewidth(0.02));
    draw(shift(h*cross(u, v))*c,black+opacity(1)+linewidth(0.02));
    draw(shift(p)*scale(1, 1, h)*unitcube, color);
}

int[] T={0, 1, 2, 3, 6, 7}; // 1d tile

real xoffset=0.3;
var J=10;

draw("$\Omega$", align=S, g=(T[0], -0.1, 0)--(T[3]+1, -0.1, 0), black+linewidth(1));
draw("$\Omega$", align=S, g=(T[4], -0.1, 0)--(T[5]+1, -0.1, 0), black+linewidth(1));

for(int j=0; j<J; ++j) {
 triple offset=(j*xoffset, j, 0);
 pen P=palegray+opacity(1);
 //if(j==0) P=blue+opacity(1);
 for(int i=0; i<T.length; ++i) {
  sq((T[i], 0, 0)+offset, P);
 }
}
\end{asy}
\caption{A stacking $\Omega'$ of the set $\Omega$ in one dimension higher.}
 \label{fig:stacking}
\end{figure} 

Note that by Theorem \ref{thm:general} (ii)  we have that a stacking $\Omega'$ of $\Omega$ tiles $\RR^{d+1}$ by translations if and only if $\Omega$ tiles $\RR^d$ by translations.

  In addition, we have the following:

\begin{theorem}\label{thm:stacking}
Let $\Omega$ be a measurable set in $\RR^d$ of finite measure. Suppose that $\Omega'$ is a stacking of $\Omega$.  If $\Omega' \subset \RR^{d+1}$ is spectral then   $\Omega\subset \RR^{d}$ is spectral.
\end{theorem}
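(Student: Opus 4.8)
The plan is to reverse the construction in Theorem \ref{thm:general}(ii): start from a spectrum $\Lambda'\subset\RR^{d+1}$ for the stacking $\Omega'$ and extract from it a set in $\RR^d$ satisfying the orthogonality condition \eqref{eq:orthogonal} for $\Omega$ with the correct density, so that Proposition \ref{spectrality} applies. Write $u=(v,1)$ and $\Omega'=\Omega\times[0,1]+\{0,u,\dots,(n-1)u\}$. First I would compute $\ft{\one_{\Omega'}}$ explicitly, as in \eqref{eq:ftOmega'}: if $\xi=(\eta,t)\in\RR^d\times\RR$, then
\begin{equation*}
\ft{\one_{\Omega'}}(\eta,t)=\ft{\one_\Omega}(\eta)\,\ft{\one_{[0,1]}}(t)\,\sum_{\ell=0}^{n-1}e^{2\pi i\ell\,(v\cdot\eta+t)}=\ft{\one_\Omega}(\eta)\,\ft{\one_{[0,1]}}(t)\,D_n(v\cdot\eta+t),
\end{equation*}
where $D_n(\theta)=\sum_{\ell=0}^{n-1}e^{2\pi i\ell\theta}$ vanishes exactly when $\theta\in\frac1n\ZZ\setminus\ZZ$. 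Thus the zero set of $\ft{\one_{\Omega'}}$ is the union of the ``vertical cylinder'' $\{\ft{\one_\Omega}(\eta)=0\}\times\RR$, the hyperplanes $\{t\in\ZZ\setminus\{0\}\}$ coming from $\ft{\one_{[0,1]}}$, and the family of hyperplanes $\{v\cdot\eta+t\in\tfrac1n\ZZ\setminus\ZZ\}$.

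Next I would project. Let $\pi\colon\RR^{d+1}\to\RR^d$ be the projection onto the first $d$ coordinates and set $\Lambda=\pi(\Lambda')$. The key claim is that $(\Lambda-\Lambda)\setminus\{0\}\subset\{\ft{\one_\Omega}=0\}$. Take two distinct $\lambda'_1=(\lambda_1,t_1)$, $\lambda'_2=(\lambda_2,t_2)$ in $\Lambda'$; orthogonality forces their difference $(\lambda_1-\lambda_2,\,t_1-t_2)$ into the zero set above. If it lands in the first piece we are done: $\ft{\one_\Omega}(\lambda_1-\lambda_2)=0$. The difficulty — and I expect this to be the main obstacle — is ruling out, or rather handling, the case where $\lambda_1=\lambda_2$ but $t_1\neq t_2$: then $\pi$ collapses the two points and the pair contributes nothing to $\Lambda-\Lambda$. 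To control this I would argue that for each fixed $\eta$ the ``fiber'' $\Lambda'\cap(\{\eta\}\times\RR)$ is itself an orthogonal set of exponentials for the $1$-dimensional set $[0,1]+\{0,v\cdot\eta,\dots\}$ — more precisely, differences $t_1-t_2$ of such fiber points must lie in $\{\ft{\one_{[0,1]}}=0\}\cup\{v\cdot\eta\cdot 0+(t_1-t_2)\in\frac1n\ZZ\setminus\ZZ\}=(\ZZ\setminus\{0\})\cup(\frac1n\ZZ\setminus\ZZ)$, i.e. $t_1-t_2\in\frac1n\ZZ\setminus\{0\}$, which is a packing-type condition forcing each fiber to have at most $n$ points per unit length, hence upper density at most $n$ inside any line.

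Finally I would assemble the density count. By Proposition \ref{spectrality}, $\dens{\Lambda'}=|\Omega'|=n|\Omega|$. Combining the fiberwise bound (at most $\approx n$ points per unit in the $t$-direction above each $\eta$) with the Fubini-type estimate
\begin{equation*}
\text{(density of }\Lambda'\text{)}\;\le\;n\cdot\text{(upper density of }\Lambda\text{ in }\RR^d\text{)},
\end{equation*}
we get $n|\Omega|\le n\cdot\overline{\dens}\,\Lambda$, hence $\overline{\dens}\,\Lambda\ge|\Omega|$. Since we have also shown $(\Lambda-\Lambda)\setminus\{0\}\subset\{\ft{\one_\Omega}=0\}$, i.e. \eqref{eq:orthogonal} holds for $\Omega$ with a set $\Lambda$ of upper density at least $|\Omega|$, Proposition \ref{spectrality} yields that $\Omega$ is spectral. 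The one genuinely technical point to execute carefully is the fiber-counting/Fubini step: making rigorous that an upper-density bound for $\Lambda'$ together with a uniform per-fiber packing bound of $n$ gives the stated lower density for the projection $\Lambda$ — this requires choosing the cubes in the density definition appropriately (long and thin, or a two-scale argument) so that the vertical packing in each fiber is fully exploited, and it is here that the distinctness of the $s_j$-type structure (here the integer spacing forcing $t_1-t_2\in\frac1n\ZZ$) does the real work.
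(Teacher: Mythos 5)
Your central claim --- that for $\Lambda=\pi(\Lambda')$ one has $(\Lambda-\Lambda)\setminus\{0\}\subset\{\ft{\one_\Omega}=0\}$ --- is false, and the case you flag as ``the main obstacle'' ($\lambda_1=\lambda_2$, $t_1\neq t_2$) is actually the harmless one. The real obstruction is the case $\lambda_1\neq\lambda_2$: the orthogonality of $(\lambda_1,t_1)$ and $(\lambda_2,t_2)$ in $L^2(\Omega')$ can be supplied entirely by the factor $\ft{\one_{[0,1]}}(t_1-t_2)$ (when $t_1-t_2\in\ZZ\setminus\{0\}$) or by the Dirichlet factor (when $v\cdot(\lambda_1-\lambda_2)+(t_1-t_2)\in\tfrac1n\ZZ\setminus\ZZ$), in which case $\ft{\one_\Omega}(\lambda_1-\lambda_2)$ need not vanish at all; yet $\lambda_1-\lambda_2$ is then a nonzero element of $\Lambda-\Lambda$. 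Your proposal never rules this out, so the projected set $\Lambda$ need not be orthogonal for $\Omega$, and the rest of the argument (the density count) has nothing to stand on.

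The paper's proof avoids projection entirely. It writes the zero set of $\ft{\one_{\Omega'}}$ as the zeros of $\ft{\one_{\Omega\times[0,1]}}$ together with a set $D$ which is a union of the $n-1$ nontrivial cosets of $H=\{\xi: u\cdot\xi\in\ZZ\}$ inside $G=\{\xi: u\cdot\xi\in\tfrac1n\ZZ\}$. Since two points in the same coset of $H$ have difference in $H$, hence outside $D$, one can, for each coset $\lambda_0+G$ meeting $\Lambda'$, keep only the points of $\Lambda'$ lying on the most populated of the $n$ cosets of $H$ it contains. This retains lower density at least $\tfrac1n\dens{\Lambda'}=|\Omega\times[0,1]|$ while forcing all pairwise differences of the kept set to land in $\{\ft{\one_{\Omega\times[0,1]}}=0\}$; Proposition \ref{spectrality} then gives that $\Omega\times[0,1]$ is spectral. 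Only at that point does one descend to $\Omega$, and this step is not a formality: it is the cylindric-domain theorem of Greenfeld--Lev \cite[Theorem 1.1]{GL16}, since a spectrum of $\Omega\times[0,1]$ need not be a product and cannot simply be projected. Your plan tries to collapse both the coset-selection step and this last theorem into a single projection, and that is where it breaks.
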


\begin{remark}
    Note that Theorem \ref{thm:general}(ii) gives that the converse is also true: If $\Omega\subset \RR^d$ is spectral then $\Omega' \subset \RR^{d+1}$ is spectral. However, in this section we will only use the direction in the statement of Theorem \ref{thm:stacking}.
\end{remark}

\begin{proof}
    We have
$$
\one_{\Omega'} = \one_{\Omega\times [0,1]} * (\delta_0 + \delta_{u} + \cdots + \delta_{(n-1)u})
$$
so, when $u\cdot \xi \notin\ZZ$, with $\xi=(\xi_1, \xi_2, \ldots, \xi_{d+1})\in\RR^{d+1}$, we have
\begin{align}
\ft{\one_{\Omega'}}(\xi) &=
 \ft{\one_\Omega}(\xi_1, \ldots, \xi_d)\ft{\one_{[0,1]}}(\xi_{d+1}) \left( \sum_{j=0}^{n-1} e^{2\pi i j (u\cdot\xi)}\right) \nonumber\\
&= \ft{\one_\Omega}(\xi_1, \ldots, \xi_d)\ft{\one_{[0,1]}}(\xi_{d+1}) \frac{1-e^{2\pi i n (u\cdot \xi)}}{1-e^{2\pi i(u\cdot\xi)}}. \label{new-zeros}
\end{align}
(Since we care about zeros introduced beyond those of $\ft{\one_{\Omega \times[0,1]}}$ we may assume that $u\cdot\xi \notin \ZZ$ -- see below.)
Define the subgroup of $\RR^{d+1}$
$$
G = \Set{\xi=(\xi_1, \xi_2, \ldots, \xi_{d+1}): u\cdot \xi \in \frac1n \ZZ }
$$
and its subgroup of index $n$
$$
H = \Set{\xi=(\xi_1, \xi_2, \ldots, \xi_{d+1}): u\cdot \xi \in \ZZ}.
$$
From \eqref{new-zeros} it follows that the zeros of $\ft{\one_{\Omega'}}$ are those due to $\ft{\one_{\Omega\times [0,1]}}$ plus the union of cosets of $H$ in $G$
$$
D = \left(H+\frac{u}{n\|u\|^2}\right) \cup \left(H+\frac{2u}{n\|u\|^2}\right) \cup \ldots \cup \left(H+\frac{(n-1)u}{n\|u\|^2}\right).
$$
If two distinct points of $\RR^{d+1}$ are in the same coset of $H$ then their difference is in $H$, so it is not in $D$.

Suppose $\Lambda' \subseteq \RR^{d+1}$ is a spectrum of $\Omega'$.
Then, by Proposition \ref{spectrality}: $$\dens{\Lambda'} = \Abs{\Omega'}=n\Abs{\Omega}=n\Abs{\Omega\times [0,1]}$$
(since for every $0\leq j < j'\leq n-1$, $|(\Omega\times [0,1]+ju)\cap (\Omega\times [0,1]+j'u)|=0$).
We will now select elements of $\Lambda'$ of density at least $\Abs{\Omega\times [0,1]}$  whose pairwise differences do not intersect $D$. If we call $\Lambda$ the set of  those elements of $\Lambda'$ that we kept, it follows that the pairwise differences of $\Lambda$ all fall in $\Set{\ft{\one_{\Omega\times [0,1]}}=0}$.

To select the points of $\Lambda'$, we want we look at every coset $\lambda+G$, $\lambda\in\Lambda'$. For each $\lambda_0\in\Lambda'$, at least a fraction $1/n$ of the points in $\Lambda'\cap \lambda_0+G$ are on 
one of the cosets
\begin{equation}\label{Hcosets}
    \lambda_0+H +j\tilde u,\quad  j=0, 1, \dots, n-1
\end{equation}
of $H$, where $$\tilde u=\frac{u}{n\|u\|^2}.$$ We keep precisely those points of $\Lambda'$ on $\lambda_0+G$, i.e., those on the most populated (highest density) of the  $n$ cosets \eqref{Hcosets}.
It follows that for any two points we kept their difference is either not in $G$ (hence also not in $D \subseteq G$) or, if their difference is in $G$, then it is in $H$, hence again not in $D$. 

Thus,  we conclude that if $\Lambda'$ is a spectrum for $\Omega'$ then we have that $E(\Lambda)$ is orthogonal in $L^2(\Omega\times [0,1])$. Moreover, by the construction of $\Lambda$ its lower   density  is bounded from below by $$\frac1n \dens{\Lambda'}= |\Omega\times [0,1]|.$$
By Proposition \ref{spectrality}, we then have that   $\Omega\times [0,1]$ is spectral. Thus, from  \cite[Theorem 1.1]{GL16} it follows  that $\Omega$ is spectral.
\end{proof}

Using Theorem \ref{thm:stacking}, we can finally prove Theorem \ref{main:tilenotspectral}:

\begin{proof}[Proof of Theorem \ref{main:tilenotspectral}.]
Let $d\geq 3$. By \cite{KM2}, we can choose a finite disjoint union of unit cubes $\Omega\subset \RR^d$ which tiles the space by translations and is not spectral.

Our goal is to construct higher dimensional bridges  between the connected components of $\Omega$ while preserving its tiling and spectral properties.

We denote by $C_1,\dots , C_m\subset \Omega$ , $m>1$, the connected components of $\Omega$, and let $\tilde C_j$ be the set of the centers of the cubes that $C_j$ consists of. We may assume, without the loss of generality, that 
$$
\min_{1\leq i<j\leq m} \min\{\|c_i-c_j\|\colon c_i\in \tilde C_i, c_j\in \tilde C_j\}=\min\{\|c_1-c_2\|\colon c_1\in \tilde C_1, c_2\in \tilde C_2\}.
$$

Let 
$$D_{(C_1,C_2)}\coloneqq \min\{\|c_1-c_2\|\colon c_1\in \tilde C_1, c_2\in \tilde C_2\}=\|b-a\|$$
where  $a\in \tilde C_1$, $b\in \tilde C_2$  are centers of unit cubes in $C_1$, $C_2$ of minimal distance.

Let $n=\Ceil{D_{(C_1,C_2)}}$ be the natural number closest (from above) to $\|b-a\|$, so that 
\begin{equation}\label{eq:connect}
    |C_j\cap (C_j+v)|>0,\quad j=1,\dots,m
\end{equation}
where $v=\frac{(b-a)}{n}\in \RR^d$. Consider the stacking $\Omega_1$ of $\Omega$:
\begin{equation}
    \Omega_1:=\Inn\left((\Omega\times [0,1])\oplus\Set{0,u,2u,\dots, \Floor{\frac{n}{2}} u}\right)
\end{equation}
where $u=(v,1)\in \RR^{d+1}$. This is a disjoint sum because of the 1 in the last coordinate of $u$, up to measure zero. In other words, for every $0\leq j<j'\leq \Floor{\frac{n}{2}}$ $$|(\Omega\times [0,1]+ju)\cap (\Omega\times [0,1]+j'u)|=0.$$ By Theorem \ref{thm:general} (since the boundary of $\Omega$ has zero measure) we know that $\Omega_1$ tiles $\RR^{d+1}$ since $\Omega$ tiles $\RR^d$ and from Theorem \ref{thm:stacking} we also have that  $\Omega_1$ is not spectral since  $\Omega$ is not spectral. 
We denote 
$$C^1_j=\Inn\left( (C_j\times [0,1])\oplus \Set{0,u,2u,\dots, \Floor{\frac{n}{2} u}}\right),\quad j=1,\dots,m.$$
Then, by \eqref{eq:connect}, for each $j=1,\dots,m$, the set $C^1_j$ is connected; moreover, its closure is a finite union of closed unit cubes with centers
$$
\tilde C^1_j:=\pp{\tilde C_j\times\Set{\frac{1}{2}}}\oplus \Set{0,u,2u,\dots, \Floor{\frac{n}{2}} u}.
$$

Let 
$$
D_{(C^1_1,C^1_2)}\coloneqq \min\Set{\Norm{c^1_1-c^1_2}\colon c^1_1\in \tilde C^1_1, c^1_2\in \tilde C^1_2}.
$$
Observe that, as 
$$\pp{a,\frac{1}{2}}+\Floor{\frac{n}{2}} u\in C^1_1\quad 
 \pp{b,\frac{1}{2}}\in C^1_2,$$ and 
$$n-1<\Norm{a-b}=D_{C_1,C_2}\leq n,
$$
we have:
\begin{align}
     D_{(C^1_1,C^1_2)}&\leq \Norm{\pp{a,\frac{1}{2}}+\Floor{\frac{n}{2}} u-\pp{b,\frac{1}{2}}} \notag \\
    &=  \Norm{\pp{\pp{a-b}+\Floor{\frac{n}{2}}\frac{\pp{b-a}}{n},\Floor{\frac{n}{2}}}} \notag \\
     &= \frac{1}{2} \begin{cases} 
    \Norm{\pp{\frac{n+1}{n}(a-b),n-1}} & n \hbox{ is odd}\\
    \Norm{\pp{a-b,n}} & n \hbox{ is even}
    \end{cases} \notag \\ 
    &= \frac{1}{2}\begin{cases} 
    \sqrt{\pp{\frac{n+1}{n}\Norm{a-b}}^2+\pp{n-1}^2} & n \hbox{ is odd}\\
    \sqrt{\Norm{a-b}^2+n^2 } & n \hbox{ is even}
    \end{cases}
    \notag \\
    &\leq \frac{1}{2}\begin{cases} 
    \sqrt{2\pp{\frac{n+1}{n}\Norm{a-b}}^2} & n \hbox{ is odd}\\
    \sqrt{2n^2 } & n \hbox{ is even}
    \end{cases}
    \notag \\
    &=  \frac{1}{\sqrt 2} \begin{cases} 
    \frac{n+1}{n} D_{(C_1,C_2)} & n \hbox{ is odd}\\
    n & n \hbox{ is even}
    \end{cases}. \label{eq:distance}
\end{align}

\begin{figure}[ht]
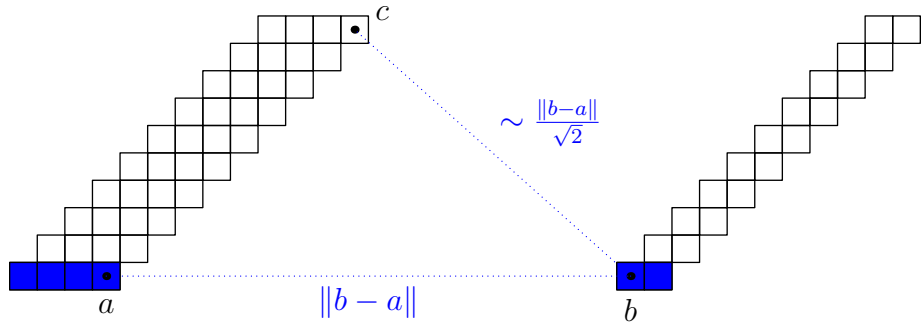

\centering
\begin{asy}
size(12cm);

int i, j, k, NN=10;

int[] T={0, 1, 2, 3, 22, 23}; // 1d tile

pair a=(T[3], 0)+(0.5, 0.5), b=(T[4], 0)+(0.5, 0.5), c=a+(NN-1, NN-1);

for(i=0; i<NN; ++i) {
	for(j=0; j<T.length; ++j)
		draw(box((T[j]+i, i), (T[j]+1+i, i+1)));
}
for(j=0; j<T.length; ++j) filldraw(box((T[j], 0), (T[j]+1, 1)), blue);
dot(a); label("$a$", a-(0,1/2), S);
dot(b); label("$b$", b-(0,1/2), S);
draw("$\|b-a\|$", a--b, blue+dotted);
dot(c); label("$c$", c+(1,0), N);
draw("$\sim \frac{\|b-a\|}{\sqrt{2}}$", b--c, blue+dotted, align=NE);
\end{asy}
\caption{Shortening the distance between two connected components. The slope of the line from $a$ to $c$ is approximately 1 when $n$ is large. The blue set is $\Omega\times[0, 1]\subseteq \RR^d\times\RR$.}
 \label{fig:shorten}
\end{figure}

We have the following possible cases:
\begin{itemize}
\item[Case 1:] If $D_{(C_1,C_2)}< 2$, then $n\leq 2$ and we have that $\Inn( C^1_1\cup C^1_2)$ is connected. Indeed, clearly  $D_{(C_1,C_2)}> 1$ as otherwise   $C_1\cap C_2$ is non-empty but this contradicts the assumption that $C_1, C_2$ are different connected components; therefore, we must have $n= 2$,  $u=(\frac{b-a}2, 1)$ and the intersection of the cube in $C^1_1$ that is centered at $(\frac{a+b}{2},\frac{3}{2})$  and the side of the cube centered at $(b,\frac{1}{2})$ in $C^1_2$ contains interior points. Thus, $\Omega_1$ has at most $m-1$ connected components. (See Figure \ref{fig:touching}.)

\begin{figure}[ht]
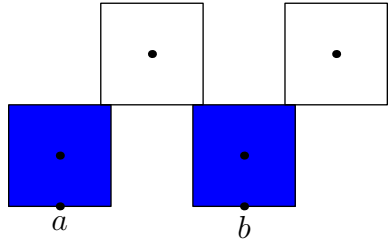

\centering
\begin{asy}
size(5cm);

real eps=0.2, a=0.5, b=a+2-eps;

filldraw(box((0, 0), (1, 1)), blue);
filldraw(box((b-0.5, 0), (b+0.5, 1)), blue);
dot((a, 0.5)); dot((a, 0)); label("$a$", (a, 0), S);
dot((b, 0.5)); dot((b, 0)); label("$b$", (b, 0), S);
draw(box(((a+b)/2-0.5, 1), ((a+b)/2+0.5, 2)));
dot(((a+b)/2, 3/2));
draw(box(((a+b)/2-0.5+(b-a), 1), ((a+b)/2+0.5+(b-a), 2)));
dot(((a+b)/2+(b-a), 3/2));
\end{asy}
\caption{This is Case 1, with $D_{(C_1, C_2)} < 2$.}
 \label{fig:touching}
\end{figure} 
    \item[Case 2:] If $D_{(C_1,C_2)}\geq 2$, then, by \eqref{eq:distance}: 
    \begin{equation}\label{eq:contraction}
        D_{(C^1_1,C^1_2)}< \frac{4}{3\sqrt 2}D_{(C_1,C_2)} <  0.94281 \cdot D_{(C_1,C_2)}.
    \end{equation} 
    Indeed, if $D_{(C_1,C_2)}= n=2$, then \eqref{eq:distance} gives
    $$D_{(C^1_1,C^1_2)}\leq \frac{D_{(C_1,C_2)}}{\sqrt 2}
    $$ which implies \eqref{eq:contraction}.
    If  $3\leq n$ is odd, then  \eqref{eq:distance}  implies \eqref{eq:contraction}, since 
    $$\frac{(n+1)}{n}\leq \frac{4}{3}
    $$ in this case.
Otherwise,  $4\leq  n$ is even, and then by \eqref{eq:distance} we have
$$
D_{(C_1^1, C_2^1)} \le \frac{n}{\sqrt2} \le \frac{D_{(C_1,C_2)}+1}{\sqrt2},
$$
which implies \eqref{eq:contraction} since
$$D_{(C_1,C_2)}+1< \frac{4}{3}D_{(C_1,C_2)}
$$ in this case.
\end{itemize}

Unless Case 1 applies, we repeat the process above. In the $k$-th iteration ($k\geq 2$),
the distance between the components $C^{k-1}_1$ and $C^{k-1}_2$ of $\Omega_{k-1}\subset \RR^{d+k-1}$ shrinks at a uniform rate in $C^k_1,C^k_2\subset \Omega_k\subset \RR^{d+k}$. (See Figure \ref{fig:shorten}.)
Hence, after $l<\infty$ iterations, we obtain a set $\Omega_l$ in $\RR^{d+l}$ which is a tile and is not spectral and such that
$D_{(C^{l-1}_1,C^{l-1}_2)}< 2$. Therefore, as in Case 1 above, the set $C^l_1\cup C^l_2$  in $ \Omega_l\subset \RR^{d+l}$ is connected. We constructed  a ``spiral bridge'' in $\Omega_l\subset \RR^{d+l}$ between the original components $C_1$ and $C_2$ of $\Omega$, thus $\Omega_l$  has at most $m-1$ connected components. 

\begin{figure}[ht]
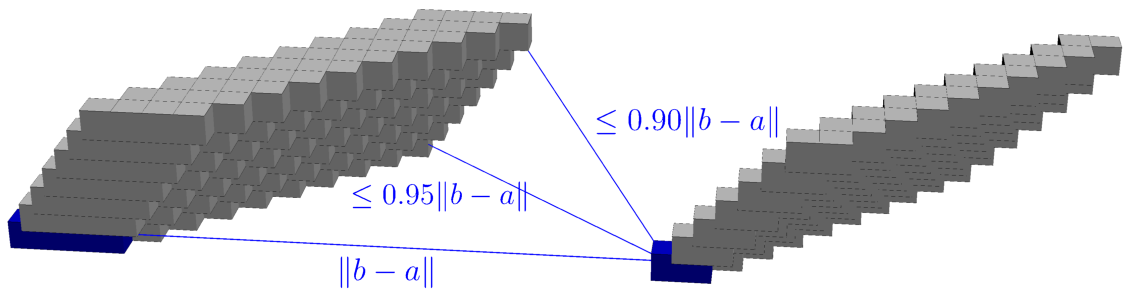

\centering
\begin{asy}
import three;
size(15cm,0);

currentprojection = perspective(10, 5, -1);

void thick(triple p, triple u, triple v, real thickness, pen color) {
    path3 c=p--p+u--p+u+v--p+v--cycle;
    draw(c, black+opacity(1)+linewidth(0.02));
    draw(surface(c),color);
    triple n=scale3(thickness)*unit(cross(u, v));
    c=shift(n)*c;
    draw(c,black+opacity(1)+linewidth(0.02));
    draw(surface(c),color);

    path3 t=p--p+n--p+n+u--p+u--cycle;
    draw(surface(t), color);
    draw(surface(shift(v)*t), color);

    path3 t=p--p+n--p+n+v--p+v--cycle;
    draw(surface(t), color);
    draw(surface(shift(u)*t), color);
}

int i, j, k, z, NN=10, NNN=7;

int[] T={0, 1, 2, 3, 22, 23}; // 1d tile

triple a=(T[3], 0, 0.5)+(0.5, 0.5, 0), b=(T[4], 0, 0.5)+(0.5, 0.5, 0), c=a+(NN-1, NN-1, 0),
  v=(b-c)/(2*NNN), d=c+(NNN-1)*v+(0, 0, NNN-1);

pen P=palegray+opacity(1), Q=blue+opacity(1);

real L = T[T.length-1];
currentprojection = perspective(0.1*L, -L/4, 3);

for(z=0; z<NNN; ++z) {
 for(i=0; i<NN; ++i) {
	for(j=0; j<T.length; ++j)
		thick( (T[j]+i, i, z)+z*v, (1, 0, 0), (0, 1, 0), 1, (i==0 && z==0)?Q:P);
 }
}
draw("$\|b-a\|$", a--b, blue, align=S);
//draw("$\sim \frac{\|b-a\|}{\sqrt{2}}$", c--b, blue, align=W);
//draw("$\sim \frac{\|b-a\|}{2}$", d--b, blue, align=NE);
draw("$\le 0.95\|b-a\|$", c--b, blue, align=W);
draw("$\le 0.90\|b-a\|$", d--b, blue, align=NE);

\end{asy}
\caption{Two steps of the stacking procedure. The blue set is $\Omega\times[0, 1]^2 \subseteq \RR^d\times\RR^2$. The distance between the two connected components is being reduced exponentially.
The bottom layer of the cubes is the same as that in Fig. \ref{fig:shorten}, where they are shown in dimension $d+1$ (after just one step of the stacking procedure.}
 \label{fig:3dbridge}
\end{figure} 

We iterate this process, constructing  $m-1$ spiral bridges between all the components of the original set $\Omega$ while preserving its tiling  and non-spectrality properties, to eventually obtain a connected set $\tilde \Omega\subset \RR^{\tilde d}$ which tiles the space by translations and is not spectral. Finally, observe that by construction, $\tilde \Omega$ is a finite union of closed unit cubes, hence $\tilde \Omega$ is the closure of its interior. This completed the proof of Theorem \ref{main:tilenotspectral}.
\end{proof}

\section{Discussion and open problems}\label{sec:open}

\subsection{Repairing the periodic tiling conjecture}
Despite the fact that several positive results towards Conjecture \ref{ptc} have been obtained over the years (see \cite[Section 1]{GT22} for a partial list), the conjecture was recently proven to be false in high dimensions \cite{GT22}. However, the aperiodic translational tile constructed  in \cite{GT22}  is a very complicated disconnected set, and, on the other hand, Conjecture \ref{ptc} is known to hold for convex domains in {\it all} dimensions \cite{V,M} in a strong sense: every convex translational tile is also a {\em lattice} tile. This naturally motivates one to seek the weakest regularity assumption on the structure of a set under which  the periodic tiling conjecture is true in all dimensions. 

In this paper we construct  aperiodic translational tiles which are connected, showing that a connectedness assumption is not strong enough for the purpose of repairing the periodic tiling conjecture. We therefore must  strengthen it, and look for a regularity assumption in the spectrum between connectedness and convexity. This gives rise to the following questions:

\begin{question}\label{simply}
    Does Conjecture \ref{ptc} hold for simply connected sets in all dimensions?  
\end{question}

We suspect that by adapting the method in this paper, constructing folded bridges between the  connected components, one might prove a negative answer to Question \ref{simply}.  Upon a negative answer to Question \ref{simply}, we can further ask:

\begin{question}\label{balls}
    Does Conjecture \ref{ptc} hold for topological balls in all dimensions?  
\end{question}

Note that while Conjecture \ref{ptc} is still open in the plane\footnote{But is known to be true in $\ZZ^2$ \cite{BH}.}, it is known to be true for topological disks \cite{BN,K}.

\subsection{Repairing Fuglede's conjecture}
Conjecture \ref{Fug} inspired extensive research concerning the connection between spectrality and   tiling by translations.   Over time, it has became apparent that in many respects, spectral sets ``behave like" sets which can tile the space by translations. 
However, after a few decades, counterexamples to both directions of the conjecture were constructed in dimension $d\geq 3$ (see \cite[Section 4]{KM10} and the references therein).

Although the connection between the analytic notion of spectrality and the geometric notion of tiling by translations has been intensively studied, the precise connection is still a mystery.

\begin{question}\label{ques:spectile}
What is the precise connection between spectral sets and translational tiles?
\end{question}

This suggests the problem of determining  the exact conditions under which  Conjecture \ref{Fug} holds. 
In this paper, we solve the problem for {\it connected} sets, showing that there are connected counterexamples to Fuglede's conjecture.
On the other hand, Conjecture \ref{Fug} was proven to hold for convex domain in all dimensions \cite{IKT,GL17,LM}. This suggests the study of the following question:

\begin{question}
Are there any topological conditions on a set that force either of the directions of the Conjecture \ref{Fug} to be true?
\end{question}

\subsection{Connectedness in low dimensions}
Our main results, Theorems \ref{main:aperiodic}, \ref{main:spectralnottile} and \ref{main:tilenotspectral}, demonstrate that the higher the dimension is the weaker a connectedness assumption becomes. In particular, we show that any aperiodic $d$-dimensional translational tile gives rise to a $(d+2)$-dimensional aperiodic {\it connected} translational tile. One can ask about the necessity of the two additional dimensions, as follows: 

\begin{question}
What is the minimal $d$ such that there is a $d$-dimensional {\em connected} aperiodic translational tile? 
\end{question}

We can ask the corresponding questions in the context of Conjecture \ref{Fug}:

\begin{question}
What is the minimal $d\leq  5$ such that there is a $d$-dimensional {\em connected} counterexample to the direction ``spectral $\Rightarrow$ tiles'' of Conjecture \ref{Fug}?   
 \end{question}

\begin{question}
What is the minimal $d$ such that  there is a $d$-dimensional {\em connected} counterexample to the direction ``tiles $\Rightarrow$ spectral'' of Conjecture \ref{Fug}? 
\end{question}

In particular, can the proof of Theorem \ref{main:tilenotspectral} be amended to give a connected, non-spectral tile in a known dimension, in the spirit of Theorem \ref{main:spectralnottile}?
 Notice that the construction in the proof of Theorem \ref{main:tilenotspectral} of spiral bridges goes up in dimension by a number that depends on the tile we are starting from.

\subsection{Aperiodicity and spectrality}
In \cite{fug} it was observed that by the Poisson summation formula, for a {\em lattice} $\Lambda\subset \RR^d$, a measurable set $\Omega\subset \RR^d$ tiles by translations along $\Lambda$ if and only if the dual lattice $\Lambda^*$ is a spectrum for $\Omega$. This might be regarded as the motivation for  Conjecture \ref{Fug}. Thus, the recent discovery of aperiodic translational tiles \cite{GT22} brings up the question about possible connection between counterexamples to Conjecture \ref{ptc} and counterexamples to Conjecture \ref{Fug}:

\begin{question}\label{aperiodicspec}
Is there any aperiodic translational tile $\Omega\subset \RR^d$ which is spectral?
\end{question}

Note that a negative answer to Question \ref{aperiodicspec} would give rise to a new class of counterexamples to Fuglede's conjecture.

\subsection{Quantitative aperiodicity in dimension 1}

It is well known that if  a finite $F \subseteq \ZZ$ tiles $\ZZ$ by translation then the tiling is necessarily periodic \cite{newman}. In other words if $F \oplus A = \ZZ$ then there is $N>0$ such that $A+N = A$. How large can or must this $N$ be compared to a measure of size of $F$, let us say compared to the diameter $D$ of $F$? While it is known that $N$ can be even exponentially large in $D$ \cite{kolountzakis2003long,steinberger05,steinberger09} , and  must be at most polynomially large in $D$, when $|F|$ is kept fixed \cite{GT21a}, no example of a tile $F$ is known where the minimal possible such $N$ (over all possible tilings by $F$) is more than linearly large in $D$. Such a tile $F$, all of whose tilings by translation would have periods much larger than $D$, would be a one-dimensional, quantitative analogue of aperiodicity.

\begin{question}
Does there exist a family of finite sets $F_n\subseteq\ZZ$ with diameter $$\diam{F_n} \to \infty$$ which tile by translation and  the minimal period $N_n$ of the tilings that $F_n$ admits satisfies
$$
\frac{N_n}{\diam{F_n}} \to \infty?
$$
\end{question}

Observe that the existence of such a construction   would, in particular, disprove the Coven--Meyerowitz conjecture \cite{CM}. (See also the discussion in \cite[Section 4]{LZ}.) 

\subsubsection*{Note added in proof}  Izabella \L aba and Dmitrii Zakharov \cite{LZ} recently  proved that there exists and absolute constant $c>0$ such that if a finite set $F\subset \ZZ$ tiles $\ZZ$ then it must admit a tiling of period at most $$\exp\left(\frac{c(\log \diam{F})^2}{\log \log \diam{F}}\right).$$

\end{document}